\newtheorem{definition}{Definition}
\newtheorem{theorem}{\bf Theorem}[section]
\newtheorem{remark}{\bf Remark}[section]
\newtheorem{proposition}{Proposition}[section]
\newtheorem{lemma}{Lemma}[section]
\newtheorem{corollary}{Corollary}[section]
\date{}
\title{Subordinated Compound Poisson processes of order $k$}
\author{Ayushi Singh Sengar and N. S. Upadhye*\\{\small Department of Mathematics, Indian Institute of Technology Madras}\\
\small{Chennai-600036, Tamil Nadu, INDIA.}\\
\small{*e-mail:neelesh@iitm.ac.in}}
\begin{document}
	\maketitle

\begin{abstract}
\noindent
In this article, the compound Poisson processes of order $k$ (CPPoK) is introduced and its properties are discussed. Further, using mixture of tempered stable subordinator (MTSS) and its right continuous inverse, the two subordinated CPPoK with various distributional properties are studied. It is also shown that the space and tempered space fractional versions of CPPoK and PPoK can be obtained, which generalize the results in the literature for the process defined in \cite{sfpp}.
\end{abstract}

\section{Introduction}
\noindent
The Poisson distribution has been the conventional model for count data analysis, and due to its popularity and applicability various researchers have generalized it in several directions; e.g. compound Poisson processes, fractional (time-changed) versions of Poisson processes (see \cite{fell,lask,beghinejp2009}, and references therein). A handful of researchers have also studied the distributions and processes of order $k$ (see \cite{distribution,Filippu1984}). In particular, the discrete distribution of order $k$, introduced by Philippou et al. (see \cite{phili83-geo}), include binomial, geometric and negative binomial distributions of order $k$. These distributions play an important role in several areas, such as, reliability, statistics, finance, actuarial risk analysis (see \cite{Mandelbrot-Fisher-Cal97,TCPPoK,BOWERS}).

\vskip 2ex
\noindent
In risk theory, the total claim amount is usually modelled by using a compound Poisson process, say $Z_t = \sum_{i=1}^{N(t)} Y_i$, where the compounding random variables $Y_i$ are iid and the number of claims $N(t)$, independent of $\{Y_i\}_{i\ge 1}$, follow Poisson distribution. But, due to the restriction of single arrival in each inter-arrival time, the model is not suitable to use. Kostadinova and Minkova \cite{Poiss-order-k} introduced a Poisson process of order $k$ (PPoK), which allows us to model arrival in a group of size $k$. Recently, a time-changed version of Poisson processes of order $k$ is studied by \cite{TCPPoK} which allows group arrivals and also the case of extreme events, which is not covered by \cite{Poiss-order-k}. In spite of its applicability, this model is still not suitable for underdispersed dataset. Therefore, a generalization of this model is essential and is proposed in this article.

\vskip 2ex
\noindent
To the best of our knowledge, such a generalization is not yet studied. Therefore, we introduce the compound Poisson process of order $k$ (CPPoK) with the help of the Poisson process of order $k$ (PPoK) and study its distributional properties. Then, we time-change CPPoK with a special type of L\'evy subordinator known as mixture of tempered stable subordinator, and its right continuous inverse, and analyze some properties of these time-changed processes.  

\vskip 2ex
\noindent
The article is organized as follows. In section \ref{sec:cppok}, we introduce CPPoK and derive some of its general properties along with  martingale characterization property. In section \ref{sec:subordinator}, we introduce two types of CPPoK with the help of MTSS and its right continuous inverse, and derive some important distributional properties.


\section{Compound Poisson Process of order $k$ and its properties}\label{sec:cppok}
\noindent
In this section, we introduce CPPoK and derive its distributional properties. First, we define the Poisson distribution of order $k$.
\begin{definition}[Philippou \cite{Filippu1984}]\label{Def:pok}
Let $N^{(k)}\sim PoK(\lambda)$, the Poisson distribution of order $k$ (PoK) with rate parameter $\lambda >0$, then the probability mass function	 (pmf) of $N^{(k)}$ is given by $$ \mathbb{P}[N^{(k)} = n] =  \sum_{\substack{x_1,x_2,...,x_k\geq 0\\\sum_{j=1}^{k}jx_j=n}}e^{-k\lambda} \frac{\lambda^{x_1+x_2+..+x_k}}{x_1!x_2!...x_k!},~ n=0,1,\ldots,$$
where the summation is taken over all non-negative integers $x_1,x_2,\ldots,x_k$ such that $x_1+2x_2+\ldots +kx_k = n$.
\end{definition}

\noindent
Philippou \cite{Filippu1984} showed the existence of PoK as a limiting distribution of negative binomial distribution of order $k$. Kostadinova and Minkova \cite{Poiss-order-k} later generalized PoK to evolve over time, in terms of, a process which can be defined as follows.
	
\begin{definition}[Kostadinova and Minkova \cite{Poiss-order-k}]\label{ppok}
Let $\{N(t)\}_{t\geq 0}$ denote $PP(k \lambda)$, the Poisson process with rate parameter $k\lambda$, and $\{X_i\}_{i\geq 1}$ be a sequence of independent and identically distributed (IID) discrete uniform random variables with support on $\{1,2,\ldots,k\}$. Also, assume that $\{X_i\}_{i\geq 1}$ and $ \{N(t)\}_{t\geq 0}$ are independent. Then $\{N^{(k)}(t)\}_{t\geq 0}$, defined by
$ N^{(k)}(t) = \sum_{i=1}^{N(t)} X_i$
is called the Poisson process of order $k$ (PPoK) and is denoted by $PPoK(\lambda)$.
\end{definition}

\noindent 
However, the clumping behavior associated with random phenomenon cannot be handled by PPoK \cite{Poiss-order-k}. Hence, there is a need to generalize this notion as well. We propose the following generalization of PPoK.

\begin{definition}\label{def:cppok}
Let $\{N^{(k)}(t)\}_{t\geq 0}$ be the $PPoK(\lambda)$ and $ \{Y_i\}_{i \ge 1}$ be a sequence of IID random variables, independent of $N^{(k)}(t)$, with cumulative distribution function (CDF) $H$. Then the process $\{Z(t)\}_{t\geq 0}$ defined by $Z(t) = \sum_{i=1}^{N^{(k)}(t)} Y_i$ is called the compound Poisson process of order $k$ (CPPoK) and is denoted by $CPPoK(\lambda, H)$.
\end{definition}

\noindent
From the definition, it is clear that:
\begin{enumerate}
\item[$(i)$] for $k=1$, $\{Z(t)\}_{t \ge 0}$ is $CPP(\lambda, H)$ the usual compound Poisson process.
\item[$(ii)$] for $H=\delta_1$, the Dirac measure at 1, $\{Z(t)\}_{t \ge 0}$ is $PPoK(\lambda)$.
\item[$(iii)$] for $k=1$ and $H = \delta_1$, $\{Z(t)\}_{t \ge 0}$ is $PP(\lambda)$.
\end{enumerate}
\noindent
Next, we present a characterization of $CPPoK(\lambda, H)$, in terms of, the finite dimensional distribution (FDD).

\begin{theorem}\label{thm:fdd}
Let $\{Z(t)\}_{t \ge 0}$ be as defined in Definition \ref{def:cppok}. Then the FDD, denoted as $F_{Z(t_1),\ldots,Z(t_n)}(y_1,\ldots,y_n)=\mathbb{P}[Z(t_1)\leq y_1,\ldots, Z(t_n)\leq y_n ]$ has the following form
\begin{equation} \label{eq:FDD}         
F_{Z(t_1),\ldots,Z(t_n)}(y_1,\ldots,y_n)=
\sum_{j_1,\ldots j_n}\prod_{l=1}^{n} p_{j_l}(\Delta t_l)\int_{-\infty}^{v_1} \ldots \int_{-\infty}^{v_n}
\prod_{m=1}^{n}h^{*j_m}_{Y_1}(x_m) dx_m,
\end{equation}
where the summation is taken over all non-negative integers $j_i \geq 0,~ i=1,2,\ldots,n$, $v_k = y_k -\sum_{l=1}^{k-1}x_l, k=1,\ldots,n$, $h$ is the density/pmf of $H$, and $(*j_n)$ represents the $j_n$-fold convolution.
\end{theorem}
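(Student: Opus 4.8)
The plan is to exploit the fact that the CPPoK inherits stationary and independent increments from the underlying PPoK, reduce the joint CDF to an iterated integral over the increments, and then condition on the number of PPoK events in each time window. First I would establish that $\{Z(t)\}_{t\ge0}$ has stationary and independent increments. Recall that $N^{(k)}(t)=\sum_{i=1}^{N(t)}X_i$ is a compound Poisson process and hence possesses stationary independent increments; grouping the summands of $Z(t)=\sum_{i=1}^{N^{(k)}(t)}Y_i$ according to the Poisson jumps of $N(t)$ exhibits $Z$ itself as a compound Poisson process with rate $k\lambda$ and jump variable $S_1=\sum_{i=1}^{X_1}Y_i$, from which the increment properties follow. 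Writing $t_0=0$, $Z(0)=0$, and assuming (as is implicit in $\Delta t_l=t_l-t_{l-1}$) that $t_1<\cdots<t_n$, I set $W_l=Z(t_l)-Z(t_{l-1})$, so that $Z(t_k)=\sum_{l=1}^{k}W_l$ and the random variables $W_1,\ldots,W_n$ are independent.

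Next I would compute the law of a single increment. By stationarity $W_l\stackrel{d}{=}Z(\Delta t_l)$, and conditioning on $N^{(k)}(\Delta t_l)=j_l$ the increment is a sum of $j_l$ IID copies of $Y$, whose density/pmf is the $j_l$-fold convolution $h^{*j_l}$. With $p_{j_l}(\Delta t_l)=\mathbb{P}[N^{(k)}(\Delta t_l)=j_l]$ denoting the PPoK increment pmf, the density of $W_l$ is therefore $f_{W_l}(x)=\sum_{j_l\ge0}p_{j_l}(\Delta t_l)\,h^{*j_l}(x)$.

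Then I would assemble the joint CDF. Changing variables from $(Z(t_1),\ldots,Z(t_n))$ to the increments $(W_1,\ldots,W_n)$, the event $\{Z(t_k)\le y_k,\ k=1,\ldots,n\}$ becomes the nested constraints $x_k\le v_k:=y_k-\sum_{l=1}^{k-1}x_l$. By independence of the increments the joint density factors, which gives the iterated integral
$$F_{Z(t_1),\ldots,Z(t_n)}(y_1,\ldots,y_n)=\int_{-\infty}^{v_1}\cdots\int_{-\infty}^{v_n}\prod_{l=1}^{n}f_{W_l}(x_l)\,dx_l.$$
Substituting the expression for $f_{W_l}$ from the previous step and interchanging the summations with the integrals (justified by nonnegativity and absolute convergence) pulls out the factor $\sum_{j_1,\ldots,j_n}\prod_{l=1}^{n}p_{j_l}(\Delta t_l)$, yielding exactly \eqref{eq:FDD}.

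The main obstacle I anticipate is the bookkeeping in the change of variables: correctly translating the cumulative constraints $Z(t_k)\le y_k$ into the nested upper limits $v_k$ on the increment variables, and justifying the factorization of the joint density through the independence of increments together with the interchange of the infinite sums and the iterated integrals. The increment-structure claim in the first step is the conceptual crux, but once it is in hand the remaining manipulations are routine.
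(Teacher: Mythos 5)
Your proposal is correct and takes essentially the same route as the paper: decompose over the partition into increments, condition on the PPoK counts $N^{(k)}(\Delta t_l)=j_l$ in each window, and express the resulting law through the convolution densities $h^{*j_l}$ with the nested upper limits $v_k = y_k-\sum_{l=1}^{k-1}x_l$. The only cosmetic difference is that you first exhibit $Z$ itself as a compound Poisson process to obtain independence and stationarity of its increments $W_l$, whereas the paper works directly with the independent, stationary increments of $N^{(k)}$ and the corresponding block sums of the $Y_i$'s; the computation is otherwise identical.
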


\begin{proof}
Let $0=t_0\leq t_1\leq \ldots \leq t_n=t$ be the partition of $[0,t]$. Since, the increments of $\{N^{(k)}(t)\}$ are independent and stationary, we can write $N^{(k)}(t_i) = \sum_{l=1}^{i}N^{(k)}(\Delta t_l),i=1,\ldots,n,$ and $\mathbb{P}[N^{(k)}(t)=j] = p_{j}(t), j=0,1,\ldots$.
\begin{align*}
F_{Z(t_1),\ldots,Z(t_n)}(y_1,\ldots,y_n)=&  \mathbb{P}[\sum_{i=1}^{N^{(k)}(t_1)} Y_i\leq y_1,
		 \ldots, \sum_{i=1}^{N^{(k)}(t_n)} Y_i\leq y_n ]\\
		=& \sum_{j_1,\ldots j_n}\mathbb{P}[\sum_{i=1}^{j_1} Y_i\leq y_1,
		\ldots, \sum_{i=1}^{\sum_{l=1}^{n}j_l} Y_i\leq y_n ] \prod_{l=1}^{n}\mathbb{P}[N^{(k)}(\Delta t_l)=j_l]
\end{align*}	
Let us denote $\sum_{i=1}^{j_1} Y_i=Y(j_1),\ldots, \sum_{i=j_1+\ldots +j_{n-1}+1}^{j_1+\ldots +j_n} Y_i=Y(j_n)$, then it becomes 
\begin{align*}
F_{Z(t_1),\ldots,Z(t_n)}(y_1,\ldots,y_n)	=& \sum_{j_1,\ldots j_n}\prod_{l=1}^{n} p_{j_l}(\Delta t_l)\mathbb{P}[Y(j_1)\leq y_1,
		 \ldots,\sum_{l=1}^{n}Y(j_l)\leq y_n] \\
		=& \sum_{j_1,\ldots j_n}\prod_{l=1}^{n} p_{j_l}(\Delta t_l)\int_{-\infty}^{y_1}\int_{-\infty}^{y_2-x_1} \ldots \int_{-\infty}^{y_{n}-(\sum_{l=1}^{n-1} x_l)}
		\prod_{m=1}^{n}h^{*j_m}_{Y_1}(x_m) dx_m \\
		=& \sum_{j_1,\ldots j_n}\prod_{l=1}^{n} p_{j_l}(\Delta t_l)\int_{-\infty}^{v_1} \ldots \int_{-\infty}^{v_n}
		\prod_{m=1}^{n}h^{*j_m}_{Y_1}(x_m) dx_m.
		\end{align*}
		\ifx
		\begin{align*}
		\mathbb{P}[Z(t_1)\leq y_1,Z(t_2)\leq y_2,\ldots, Z(t_n)\leq y_n ] =& \mathbb{P}[\sum_{i=1}^{N^{(k)}(t_1)} Y_i\leq y_1,\sum_{i=1}^{N^{(k)}(t_2)} Y_i\leq y_2,\ldots, \sum_{i=1}^{N^{(k)}(t_n)} Y_i\leq y_n ]\\
		=& \mathbb{P}[\sum_{i=1}^{N^{(k)}(t_1)} Y_i\leq y_1,\sum_{i=1}^{N^{(k)}(t_1)+N^{(k)}(t_2-t_1)} Y_i\leq y_2,\ldots, \\ 
		& \sum_{i=1}^{N^{(k)}(t_1)+N^{(k)}(t_2-t_{1})+\ldots +N^{(k)}(t_n-t_{n-1})} Y_i\leq y_n ]\\
		=& \sum_{j_1,j_2,\ldots j_n}\mathbb{P}[\sum_{i=1}^{N^{(k)}(t_1)} Y_i\leq y_1,\sum_{i=1}^{N^{(k)}(t_1)+N^{(k)}(t_2-t_1)} Y_i\leq y_2,\ldots, \\ 
		& \sum_{i=1}^{N^{(k)}(t_1)+\ldots +N^{(k)}(t_n-t_{n-1})} Y_i\leq y_n |N^{(k)}(t_1)=j_1,\\
		& N^{(k)}(t_2-t_1)=j_2,\ldots,N^{(k)}(t_n-t_{n-1})=j_n ]\\
		\end{align*}
		\fi 
	\end{proof}
	\begin{remark}
		For $n=1$, \eqref{eq:FDD} reduces to 
		$\mathbb{P}[Z(t)\leq y_1] =\sum_{j_1=0}^{\infty}p_j(t_1)\int_{-\infty}^{y_1}h^{*j_1}_{Y_1}(x_1)dx_1,$ 
		which is the marginal distribution of CPPoK($\lambda$,H).
	\end{remark}
	\ifx
	\begin{theorem}
		Let $G^{n}(y) = \mathbb{P}[Y_1+Y_2+\ldots+Y_n \leq y] $ be the convolution function of n iid random variables. Then, the distribution function of the process $\{Z(t)\}_{t\geq 0}$ can be given as
		$$	\mathbb{P}[Z(t)\leq z] = e^{-k\lambda t}\sum_{n=0}^{\infty}G^n(z) \sum_{\substack{x_1,x_2,...,x_k\geq 0\\\sum_{j=1}^{k}jx_j=n}} \frac{  (\lambda t) ^{x_1+x_2+..+x_k}}{x_1!x_2!...x_k!},$$
		where, the summation is taken over all non-negative integers $ x_1,x_2,\ldots,x_k$ such that $ x_1+2x_2+\ldots +kx_k = n.$
	\end{theorem}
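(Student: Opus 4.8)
The plan is to obtain the one-dimensional law of $Z(t)$ by conditioning on the subordinating count $N^{(k)}(t)$ and applying the law of total probability. The two ingredients are the explicit marginal pmf of the PPoK and the elementary observation that, on the event $\{N^{(k)}(t)=n\}$, the value $Z(t)$ collapses to a fixed $n$-fold sum of the IID jumps $Y_i$.

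First I would record the marginal pmf of $\{N^{(k)}(t)\}_{t\ge 0}$. Writing $p_n(t)=\mathbb{P}[N^{(k)}(t)=n]$, one has
\[
p_n(t)=e^{-k\lambda t}\sum_{\substack{x_1,\ldots,x_k\geq 0\\ \sum_{j=1}^{k} jx_j=n}}\frac{(\lambda t)^{x_1+\cdots+x_k}}{x_1!\cdots x_k!},
\]
that is, $N^{(k)}(t)\sim \mathrm{PoK}(\lambda t)$ in the sense of Definition \ref{Def:pok} with $\lambda$ replaced by $\lambda t$. If this is not taken as already known from \cite{Poiss-order-k}, I would derive it in one line via generating functions: since $N^{(k)}(t)=\sum_{i=1}^{N(t)}X_i$ with $N(t)\sim PP(k\lambda)$ and $X_i$ uniform on $\{1,\ldots,k\}$, the pgf of $X_1$ is $\tfrac1k(s+s^2+\cdots+s^k)$, so by the compound (subordination) formula the pgf of $N^{(k)}(t)$ equals $\exp\{\lambda t(s+\cdots+s^k)-k\lambda t\}=e^{-k\lambda t}\prod_{j=1}^{k}e^{\lambda t\, s^j}$, and reading off the coefficient of $s^n$ recovers $p_n(t)$ above.

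Next I would condition. By the law of total probability,
\[
\mathbb{P}[Z(t)\leq z]=\sum_{n=0}^{\infty}\mathbb{P}\bigl[Z(t)\leq z\mid N^{(k)}(t)=n\bigr]\,p_n(t).
\]
On $\{N^{(k)}(t)=n\}$ the sum $Z(t)=\sum_{i=1}^{N^{(k)}(t)}Y_i$ equals $Y_1+\cdots+Y_n$, and since $\{Y_i\}_{i\ge 1}$ is independent of $N^{(k)}(t)$, the conditional probability is precisely the $n$-fold convolution CDF $G^n(z)=\mathbb{P}[Y_1+\cdots+Y_n\leq z]$ (with the empty-sum convention $G^0(z)=\mathbf{1}_{\{z\ge 0\}}$ at $n=0$). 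Substituting the expression for $p_n(t)$ and pulling the common factor $e^{-k\lambda t}$ out of the sum then yields exactly the stated formula.

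I do not anticipate a genuine obstacle, as the argument is a direct conditioning computation. The only points meriting a word of care are the empty-sum convention at $n=0$ and the justification for treating the double summation (the outer sum over $n$ together with the inner combinatorial sum over the compositions $x_1,\ldots,x_k$ with $\sum_j j x_j=n$) as a single absolutely convergent series. The latter is immediate because every term is nonnegative and $\sum_n p_n(t)=1$, so Tonelli's theorem permits the rearrangement without any convergence difficulty.
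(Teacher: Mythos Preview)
Your proof is correct and follows essentially the same route as the paper's: condition on $\{N^{(k)}(t)=n\}$, use independence of $\{Y_i\}$ from $N^{(k)}(t)$ to obtain $G^n(z)$, and substitute the explicit PPoK pmf. The only difference is that you add the pgf derivation of $p_n(t)$ and the remarks on the empty sum and Tonelli, whereas the paper simply invokes Definition~\ref{Def:pok} for the pmf and does not comment on convergence.
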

	
	\begin{proof}
		\begin{eqnarray*}
			\mathbb{P}[Z(t)\leq z] &=& \mathbb{P}[\sum_{i=1}^{N^{(k)}(t)} Y_i \leq z] \\&=&
			\sum_{n=0}^{\infty} \mathbb{P}[\sum_{i=1}^{N^{(k)}(t)} Y_i \leq z |N_{k}(t) = n]\mathbb{P}[N^{(k)}(t)=n] \\ &=& \sum_{n=0}^{\infty} \mathbb{P}[\sum_{i=1}^n Y_i \leq z]\mathbb{P}[N^{(k)}(t)=n] \\ &=& \sum_{n=0}^{\infty} G^n(z)\mathbb{P}[N^{(k)}(t)=n]\\ &=& e^{-k\lambda t}\sum_{n=0}^{\infty}G^n(z) \sum_{\substack{x_1,x_2,...,x_k\geq 0\\\sum_{j=1}^{k}jx_j=n}} \frac{  (\lambda t) ^{x_1+x_2+..+x_k}}{x_1!x_2!...x_k!}.
		\end{eqnarray*}
	\end{proof}
	
	\noindent
	Now we consider $Y_1,Y_2,\ldots$ are non-negative integer valued random variable. Then its probability mass function is given as,
	\begin{eqnarray*}
		\mathbb{P}(Z(t) = m) &=&
		\sum_{n=0}^{\infty} f^{*n}(m)\mathbb{P}(N^{(k)}(t) = n),~~m=0,1,2,\ldots, 
	\end{eqnarray*}
	where $ f^{*n}(m)= \mathbb{P}(Y_1+Y_2+..Y_n = m) $ is the $n$-th fold convolution of $ Y_1,Y_2,\ldots Y_n$.
	\vskip 2ex
	\noindent
	Panjer (\cite{panjer_1981}) gave an recursive algorithm to calculate the probability distribution of compound random variable. Then many authors generalize it for many compound distributions. Now, we give the recursive scheme to calculate the distribution of CPPoK. \\
	Let $p_n = \mathbb{P}[N^{(k)} = n]$ be the \textit{pmf} of PPoK, then after some algebraic calculation, we get the following relation 
	\begin{equation}\label{recursive}
	p_n = \sum_{i=1}^{k}\frac{b_i}{n}p_{n-i},~~~n=1,2,\ldots,
	\end{equation}
	where, $b_i = i\lambda.$\\
	So the distribution of Poison distribution of order $k$ is  $\mathbf{R}_k[\mathbf{a,b}]$ as defined in see(\cite{panjer}) where $\mathbf{a}=(0,0,\ldots,0)_{k-tuple}$ and $\mathbf{b} =(b_1,b_2,\ldots,b_k)$.
	\begin{proposition}
		Let $ Z(t) = \sum_{i=1}^{N^{(k)}(t)} Y_i$ be the CPPoK where $\{Y_i\}_{i\geq 1}$ be the sequence of IID non-negative integer valued random variable with density $f$ and $ N^{(k)}$ be the PPoK whose distribution  the distribution is $\mathbf{R}_k[\mathbf{0,b}]$ as defined above. Then the \textit{pmf} of  $\{Z(t)\}_{t\geq 0}$ can be calculated by the formula,
		$$\mathbb{P}(Z(t) = m) = \sum_{h=1}^{m}\mathbb{P}(Z(t) = m-h)\sum_{i=1}^{k}\frac{\lambda h}{m}f^{*i}(h),~~m=1,2,\ldots. $$
		and  $$\mathbb{P}(Z(t) = 0) = \sum_{n=0}^{\infty} (f(0))^n p_n.$$
		where $ p_n = \mathbb{P}(N^{(k)} = n)$.
	\end{proposition}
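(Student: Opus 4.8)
The plan is to pass to probability generating functions (PGFs), convert the given pmf recursion for the claim number into a first-order differential identity, and then read off the stated formula by matching power-series coefficients. For a non-negative integer-valued law with pmf $q$, write its PGF as $\Phi_q(s)=\sum_{n\ge 0}q(n)s^n$, which is analytic on $|s|<1$; accordingly let $\Phi_Y$, $\Phi_N$ and $\Phi_Z$ denote the PGFs of $Y_1$, of the PPoK variable with pmf $p_n$, and of $Z(t)$. Conditioning on $N^{(k)}(t)$ and using the independence of the $Y_i$ gives the single structural fact I will exploit, namely the compounding identity $\Phi_Z(s)=\Phi_N(\Phi_Y(s))$.

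First I would rewrite the recursion $n\,p_n=\sum_{i=1}^{k}b_i\,p_{n-i}$ (with $b_i=i\lambda$ and the convention $p_{n-i}=0$ for $n<i$) as a differential identity for $\Phi_N$. Multiplying by $s^{n-1}$, summing over $n\ge 1$, and reindexing the inner sum by $m=n-i$ yields
\begin{equation*}
\Phi_N'(s)=\Phi_N(s)\sum_{i=1}^{k}b_i\,s^{i-1}.
\end{equation*}
Differentiating the compounding identity and substituting this relation, evaluated at $\Phi_Y(s)$, gives
\begin{equation*}
\Phi_Z'(s)=\Phi_N'(\Phi_Y(s))\,\Phi_Y'(s)=\Phi_Z(s)\,\Phi_Y'(s)\sum_{i=1}^{k}b_i\,\Phi_Y(s)^{i-1}.
\end{equation*}
The decisive simplification is to insert $b_i=i\lambda$ and recognize the resulting sum as an exact derivative, $\Phi_Y'(s)\sum_{i=1}^{k}i\lambda\,\Phi_Y(s)^{i-1}=\lambda\,\frac{d}{ds}\,C(s)$, where $C(s):=\sum_{i=1}^{k}\Phi_Y(s)^{i}$. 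Since $\Phi_Y^{i}$ is the PGF of the $i$-fold convolution, $C$ has coefficients $[s^{h}]C(s)=\sum_{i=1}^{k}f^{*i}(h)$, which is precisely the inner sum appearing in the claim.

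It then remains to extract the coefficient of $s^{m-1}$ from $\Phi_Z'(s)=\lambda\,\Phi_Z(s)\,C'(s)$. The left-hand side contributes $m\,\mathbb{P}(Z(t)=m)$; writing $C'(s)=\sum_{h\ge 1}h\big(\sum_{i=1}^{k}f^{*i}(h)\big)s^{h-1}$ and forming the Cauchy product with $\Phi_Z$ contributes $\lambda\sum_{h=1}^{m}h\,\mathbb{P}(Z(t)=m-h)\sum_{i=1}^{k}f^{*i}(h)$, so dividing by $m$ reproduces the asserted recursion for $m\ge 1$. For the initial value, conditioning on $N^{(k)}(t)=n$ and using that the $Y_i$ are non-negative integers (their sum is zero iff every term is zero) gives $\mathbb{P}(Z(t)=0\mid N^{(k)}(t)=n)=f(0)^n$, hence $\mathbb{P}(Z(t)=0)=\sum_{n\ge 0}p_n\,f(0)^n$, consistent with $\Phi_Z(0)=\Phi_N(\Phi_Y(0))$. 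The one genuinely technical point, which I expect to be the main (though mild) obstacle, is the analytic bookkeeping: justifying the term-by-term differentiation of the generating functions and the legitimacy of the Cauchy product, both valid on $|s|<1$ where the series converge absolutely, and then invoking uniqueness of power-series coefficients to equate the two sides coefficientwise.
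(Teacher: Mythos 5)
Your proof is correct, but it takes a genuinely different route from the paper. The paper's entire proof is a citation: having observed (in its equation for $p_n$) that the Poisson distribution of order $k$ lies in the class $\mathbf{R}_k[\mathbf{0},\mathbf{b}]$ with $b_i=i\lambda$, it simply invokes Theorem 9 of the cited reference on recursions for compound distributions with counting law in $\mathbf{R}_k[\mathbf{a},\mathbf{b}]$, and reads off the stated formula. You instead reprove that result from scratch in the special case $\mathbf{a}=\mathbf{0}$: you convert the pmf recursion into the differential identity $\Phi_N'(s)=\Phi_N(s)\sum_{i=1}^k b_i s^{i-1}$, push it through the compounding identity $\Phi_Z=\Phi_N\circ\Phi_Y$, exploit the exact-derivative structure $\lambda\,\frac{d}{ds}\sum_{i=1}^k\Phi_Y(s)^i$ that the choice $b_i=i\lambda$ makes available, and match power-series coefficients; all steps check out, including the Cauchy-product bookkeeping and the boundary value $\mathbb{P}(Z(t)=0)=\sum_n p_n f(0)^n$. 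What the paper's approach buys is brevity and reliance on established actuarial machinery; what yours buys is self-containedness and transparency --- it makes visible exactly where the factor $\lambda h/m$ and the inner sum $\sum_{i=1}^k f^{*i}(h)$ come from, and it does not require the reader to have the reference at hand. One caveat worth flagging (it afflicts the statement itself, not your argument): the recursion with $b_i=i\lambda$ characterizes the PoK($\lambda$) distribution, not the law of $N^{(k)}(t)$ for general $t$, whose rate is $\lambda t$; so the displayed formula for $\mathbb{P}(Z(t)=m)$ is literally valid only with $\lambda$ replaced by $\lambda t$ (equivalently, at $t=1$). Your derivation reproduces the statement as written, and would yield the corrected version verbatim by the same coefficient extraction with $b_i=i\lambda t$.
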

	\begin{proof}
		By using \eqref{recursive} in \ref[Theorem 9]{panjer}, we get this recursive relation.
	\end{proof}
	\fi
	\noindent 
	The mean and variance of the process $\{Z(t)\}_{t\geq 0}$ can be expressed as
	\begin{align}
	 \mathbb{E}[Z(t)] &= \frac{k(k+1)}{2}\lambda t \mathbb{E}[Y],\\
	\text{Var}[Z(t)] &= \frac{k(k+1)}{2}\lambda t \text{Var}(Y) + \frac{k(k+1)(2k+1)}{6}\lambda t \mathbb{E}[Y]^2. 
	\end{align}
	\subsection{Index of dispersion} In this subsection, we discuss the index of dispersion of $CPPoK(\lambda,H)$.
	\begin{definition}[Maheshwari and Vellaisamy \cite{TCFPP-pub}] \label{def:iod}
		The index of dispersion for a counting process $\{Z(t)\}_{t\geq 0}$ is defined by
		$$ I(t) = \frac{\text{Var}[Z(t)]}{\mathbb{E}[Z(t)]}.$$
		Then the stochastic process $\{Z(t)\}_{t\geq 0}$ is said to be overdispersed if $I(t)>1$, underdispersed if $I(t)<1$, and equidispersed if $ I(t) = 1$.
	\end{definition}
	\noindent
	Alternatively, Definition \ref{def:iod} can be interpreted as follows. A stochastic process $\{Z(t)\}_{t\geq 0}$ is over(under)-dispersed if $\text{Var}[Z(t)]-\mathbb{E}[Z(t)]>0(<0)$. Therefore, we first calculate
	
	\begin{eqnarray} \label{eq:iod}
		\text{Var}[Z(t)]-\mathbb{E}[Z(t)] &=& \frac{k(k+1)}{2}\lambda t \left[ \text{Var}(Y) - \mathbb{E}(Y) + \frac{(2k+1)}{3} \mathbb{E}[Y]^2\right] \nonumber \\ &=& \frac{k(k+1)}{2}\lambda t \left[ \mathbb{E}(Y^2)-\mathbb{E}(Y) + \left( \frac{2k+1}{3}-1\right) \mathbb{E}(Y)^2 \right].
	\end{eqnarray}
	From the above definition, the following cases arise:
	\begin{enumerate}
		\item[$(i)$]  If $Y_i's$ are over and equidispersed, then $CPPoK(\lambda,H)$ exhibits overdispersion.
		\item[$(ii)$] If $Y_i's$ are underdispersed with non-negative integer valued random variable i.e., $[\mathbb{E}(Y^2)-\mathbb{E}(Y)]\geq 0$, then $CPPoK(\lambda,H)$ shows overdispersion.
		\item[$(iii)$]  If $Y_i's$ are underdispersed, then  $CPPoK(\lambda,H)$ may show both over and underdispersion.
		e.g. $Y_i \sim exp(\mu)$, \eqref{eq:iod} becomes 
		$$ \text{Var}[Z(t)]-\mathbb{E}[Z(t)] = \frac{k(k+1)}{2}\lambda t\left[ \frac{1}{\mu^2} \left(\frac{2k+4}{3}\right)-\frac{1}{\mu}\right]. $$
		\noindent
		If $\mu < \frac{2k+4}{3}$, $CPPoK(\lambda,H)$ exhibits overdispersion  else underdispersion. Hence, unlike $PPoK(\lambda)$, it can capture both the cases.
	\end{enumerate}


	\vskip 2ex
	\subsection{Long range dependence}
	In this subsection, we prove the long-range dependence (LRD) property for the $CPPoK(\lambda,H)$. There are several definitions available in literature. We used the definition given in \cite{lrd2016}.
	\begin{definition}[Maheshwari and Vellaisamy \cite{lrd2016}] \label{Def:LRD} 
		Let $ 0\leq s<t$ and $s$ be fixed. Assume a stochastic process $ \{ X(t)\}_{t \geq 0}$ has the correlation function $ Corr[X(s),X(t)]$ that satisfies
		$$ c_1(s)t^{-d} \leq Corr[X(s),X(t)] \leq c_2(s)t^{-d}, $$
		for large $ t,d >0,~ c_1(s)>0 ~and~ c_2(s)>0$. i.e.,
		$$ \lim_{t \rightarrow \infty} \frac{Corr[X(s),X(t)]}{t^{-d}} = c(s)$$
		for some $ c(s)>0$ and $ d>0$. We say that, $ X(t)$ has the long-range dependence (LRD) property if $d \in (0,1)$ and short-range dependence (SRD) property if $ d \in (1,2)$.
	\end{definition}
	\begin{proposition}
		The $CPPoK(\lambda,H)$ has the LRD property.
	\end{proposition}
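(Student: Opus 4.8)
The plan is to compute the correlation function $Corr[Z(s),Z(t)]$ in closed form and read off its asymptotic decay rate, matching it against Definition \ref{Def:LRD}. The crucial structural fact is that $\{Z(t)\}_{t\ge 0}$ inherits independent and stationary increments from the underlying $PPoK(\lambda)$: since $N^{(k)}(t)=\sum_{i=1}^{N(t)}X_i$ is built on a Poisson process $N(t)\sim PP(k\lambda)$ with independent increments and an independent iid sequence $\{X_i\}$, and the $\{Y_i\}_{i\ge 1}$ are iid and independent of $N^{(k)}$, the increment $Z(t)-Z(s)=\sum_{i=N^{(k)}(s)+1}^{N^{(k)}(t)}Y_i$ is independent of $Z(s)$ for every $0\le s\le t$. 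This is the only nontrivial ingredient, and I would establish it first.

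With independence in hand, I would evaluate the covariance by writing $Z(t)=Z(s)+(Z(t)-Z(s))$ and using $Cov[Z(s),Z(t)-Z(s)]=0$, which gives the clean identity $Cov[Z(s),Z(t)]=\text{Var}[Z(s)]$ for $s\le t$. Substituting this into the correlation and invoking the variance formula already derived in the excerpt, which is linear in time, say $\text{Var}[Z(t)]=C\lambda t$ with $C=\tfrac{k(k+1)}{2}\text{Var}(Y)+\tfrac{k(k+1)(2k+1)}{6}\mathbb{E}[Y]^2>0$ a time-independent constant, I obtain
\begin{equation*}
Corr[Z(s),Z(t)]=\frac{\text{Var}[Z(s)]}{\sqrt{\text{Var}[Z(s)]\,\text{Var}[Z(t)]}}=\sqrt{\frac{\text{Var}[Z(s)]}{\text{Var}[Z(t)]}}=\sqrt{\frac{s}{t}}.
\end{equation*}

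Finally, reading off the decay, I have $Corr[Z(s),Z(t)]=\sqrt{s}\,t^{-1/2}$, so that $\lim_{t\to\infty}Corr[Z(s),Z(t)]/t^{-d}=\sqrt{s}=:c(s)>0$ with $d=\tfrac12$. Since $d=\tfrac12\in(0,1)$, Definition \ref{Def:LRD} immediately yields the LRD property. I expect no serious obstacle: the whole computation is elementary once the increment structure is secured, and the linearity of the variance makes the exponent $d=\tfrac12$ fall out automatically and land unambiguously in the long-range-dependence regime $(0,1)$. The one point deserving care is the rigorous justification that compounding by an independent iid sequence preserves the independent-increment property of $N^{(k)}$, since the entire decay rate hinges on the vanishing of $Cov[Z(s),Z(t)-Z(s)]$.
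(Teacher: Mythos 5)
Your proposal is correct and follows essentially the same route as the paper: both use the independent-increment (L\'evy) structure of $Z(t)$ to get $Cov[Z(s),Z(t)]=\text{Var}[Z(s)]$, combine this with the linear-in-$t$ variance formula to obtain $Corr[Z(s),Z(t)]=\sqrt{s}\,t^{-1/2}$, and conclude LRD with $d=\tfrac12\in(0,1)$ via Definition \ref{Def:LRD}. The only difference is that you explicitly justify the covariance identity through the increment decomposition, a step the paper leaves implicit (its $c(s)$ simplifies to exactly your $\sqrt{s}$).
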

	\begin{proof}
		Let $0\leq s<t$. Consider
		\begin{eqnarray*}
			Corr[Z(s),Z(t)] &=& \frac{Cov[Z(s),Z(t)]}{\sqrt{\text{Var}[Z(s)]\text{Var}[Z(t)]}},\\
			&=& \frac{c(s)}{t^{1/2}\sqrt{\frac{k(k+1)}{2}\lambda \text{Var}(Y) + \frac{k(k+1)(2k+1)}{6}\lambda \mathbb{E}[Y]^2}}\\
			&=& c(s) t^{-1/2},
		\end{eqnarray*}
		where, $ 0< c(s) = \sqrt{\frac{\frac{k(k+1)}{2}\lambda s \text{Var}(Y) + \frac{k(k+1)(2k+1)}{6}\lambda s \mathbb{E}[Y]^2}{\frac{k(k+1)}{2}\lambda \text{Var}(Y) + \frac{k(k+1)(2k+1)}{6}\lambda \mathbb{E}[Y]^2}},$\\
		which decays like the power law $t^{-1/2}$. Hence $CPPoK(\lambda,H)$ has LRD property.	
	\end{proof}
	
	\subsection{Martingale characterization for CPPoK}
	It is well known that the martingale characterization for homogeneous Poisson process is called Watanabe theorem (see \cite{point}). Now, we extend this theorem for $CPPoK(\lambda,H)$, where $H$ is discrete distribution with support on $\mathbb{Z^+}$ and for this we need  following two lemmas.
	\begin{lemma} \label{martingale}
		Let $ D(t) = \sum_{j=1}^{N(t)} X_j ,~t\geq 0 $ is the compound Poisson process, where $\{N(t)\}_{t\geq 0}$ is $PP(k\lambda)$ and $\{X_j\}_{j\geq 1}$ are non-negative  \textit{IID} random variable, independent from $\{N(t)\}_{t\geq 0}$, with \textit{pmf}  $\mathbb{P}(X_j=i) = \alpha_i,~ (i=0,1,2,\ldots,j=1,2,\ldots)$. Then $\{D(t)\}_{t\geq 0}$ can be represented as
		$$ D(t) \stackrel{d}{=} \sum_{i=1}^{\infty} iZ_i(t),~~t\geq 0,$$
		where, $Z_i(t),~i=1,2,\ldots$ are independent $PP(k\lambda \alpha_i)$, and the symbol $\stackrel{d}{=}$ denotes the equality in distribution.
	\end{lemma}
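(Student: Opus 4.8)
The plan is to compare probability generating functions (PGFs), which is the natural tool here because both $D(t)$ and the proposed series take values in $\mathbb{Z}^+$. First I would record the standard PGF of a compound Poisson process. Writing $\phi_X(u)=\mathbb{E}[u^{X_1}]=\sum_{i=0}^{\infty}\alpha_i u^i$ for the common PGF of the summands, conditioning on $N(t)$ and using $N(t)\sim PP(k\lambda)$ gives, for $0\le u\le 1$, the identity $\mathbb{E}[u^{D(t)}]=\exp\big(k\lambda t(\phi_X(u)-1)\big)$. The next step is to exploit the normalization $\sum_{i=0}^{\infty}\alpha_i=1$: since the $i=0$ term contributes $\alpha_0(u^0-1)=0$, we may write $\phi_X(u)-1=\sum_{i=1}^{\infty}\alpha_i(u^i-1)$, so that $\mathbb{E}[u^{D(t)}]=\prod_{i=1}^{\infty}\exp\big(k\lambda\alpha_i t(u^i-1)\big)$.

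Each factor in this product is exactly the PGF of $u^{iZ_i(t)}$ for $Z_i(t)\sim PP(k\lambda\alpha_i)$, because $\mathbb{E}[u^{iZ_i(t)}]=\mathbb{E}[(u^i)^{Z_i(t)}]=\exp\big(k\lambda\alpha_i t(u^i-1)\big)$. Invoking the assumed independence of the $Z_i(t)$, the product equals $\mathbb{E}\big[u^{\sum_{i\ge 1}iZ_i(t)}\big]$, and matching PGFs on $[0,1]$ (where they are determined, being power series with nonnegative coefficients) yields the claimed equality in distribution. The PGF identity itself is thus a one-line computation once the exponent is rearranged.

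I expect the only genuine obstacle to be the convergence bookkeeping needed before the factorization is legitimate. First I must check that $\sum_{i=1}^{\infty}iZ_i(t)$ is a well-defined random variable; the key observation is that $\sum_{i\ge 1}\mathbb{P}[Z_i(t)\ge 1]\le\sum_{i\ge 1}k\lambda\alpha_i t=k\lambda t<\infty$, so by the first Borel--Cantelli lemma only finitely many $Z_i(t)$ are positive almost surely, whence the series is an almost surely finite sum. This same finiteness justifies passing the expectation through the infinite product: on a full-measure event the partial products $u^{\sum_{i\le n}iZ_i(t)}$ are decreasing in $n$ for $0\le u\le 1$ and bounded by $1$, so dominated convergence together with independence gives $\mathbb{E}\big[u^{\sum_{i\ge 1}iZ_i(t)}\big]=\lim_n\prod_{i=1}^{n}\mathbb{E}[u^{iZ_i(t)}]=\prod_{i=1}^{\infty}\mathbb{E}[u^{iZ_i(t)}]$.

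Finally, should the statement be read as equality of the processes rather than of one-dimensional marginals, I would note that both $\{D(t)\}_{t\ge 0}$ and $\{\sum_{i\ge 1}iZ_i(t)\}_{t\ge 0}$ are Lévy processes (the latter is itself compound Poisson, being a superposition of independent scaled Poisson processes), so agreement of the one-dimensional laws at every $t$, combined with stationarity and independence of increments, upgrades automatically to equality of all finite-dimensional distributions.
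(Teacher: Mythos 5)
Your proposal is correct and follows essentially the same route as the paper: the paper's proof is precisely the one-line pgf identity $\mathbb{E}[u^{D(t)}]=\exp\bigl[k\lambda t\sum_{j\ge 1}\alpha_j(u^j-1)\bigr]=\mathbb{E}\bigl[u^{\sum_{i\ge 1}iZ_i(t)}\bigr]$ that you derive. The additional material you supply --- the Borel--Cantelli argument for almost-sure finiteness of $\sum_{i\ge 1}iZ_i(t)$, the dominated-convergence justification of the infinite product factorization, and the upgrade to equality of finite-dimensional distributions --- is exactly the bookkeeping the paper leaves implicit, so your write-up is a rigorous completion of the same argument rather than a different one.
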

	\begin{proof} 
		Let $G_{D(t)}(u)$ is the probability generating function (\textit{pgf}) of $\{D(t)\}_{t\geq 0}$, then
    $$G_{D(t)}(u) = \mathbb{E}[u^{D(t)}] =
		 \exp[\lambda kt \sum_{j=1}^{\infty}\alpha_j(u^j -1)] = \mathbb{E}[u^{\sum_{i=1}^{\infty} iZ_i(t)}].$$
	\end{proof}
	\begin{lemma}
		The \textit{pgf} of $ Z(t) = \sum_{i=1}^{N^{(k)}(t)}Y_i,~t\geq 0 $ has the following form
		$$ G_{Z(t)}(u)  = \exp \left[ \lambda kt \sum_{j_1=1}^{\infty}\frac{q^{(1)}_{j_1} +q_{j_1}^{(2)}+\ldots + q_{j_1}^{(k)}}{k} (u^{j_1}-1) \right] ,$$
		where $Y_i,~i=1,2,\ldots$ are non-negative integer valued IID random variables with $\mathbb{P}[Y_i=n] = q_n,~ n=0,1,\ldots.$
	\end{lemma}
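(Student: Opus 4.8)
The plan is to compute the \textit{pgf} $G_{Z(t)}(u)=\mathbb{E}[u^{Z(t)}]$ directly by exploiting the random-sum (compound) structure of $Z(t)$ together with the \textit{pgf} of the underlying $PPoK(\lambda)$. First I would record the \textit{pgf} of $N^{(k)}(t)$. Since $N^{(k)}(t)=\sum_{i=1}^{N(t)}X_i$ with $N(t)\sim PP(k\lambda)$ and $X_i$ discrete uniform on $\{1,\dots,k\}$ (so its \textit{pgf} is $\tfrac1k\sum_{j=1}^k u^j$), the standard compound-Poisson \textit{pgf} formula gives
\[
G_{N^{(k)}(t)}(u)=\exp\Big[k\lambda t\Big(\tfrac1k\textstyle\sum_{j=1}^k u^j-1\Big)\Big]=\exp\Big[\lambda t\textstyle\sum_{j=1}^k(u^j-1)\Big].
\]

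Next, writing $g_Y(u)=\mathbb{E}[u^{Y_1}]=\sum_{n\ge0}q_n u^n$ for the common \textit{pgf} of the summands, I would condition on $N^{(k)}(t)$ and use the independence of $\{Y_i\}$ from the count process to obtain the random-sum identity $G_{Z(t)}(u)=G_{N^{(k)}(t)}\big(g_Y(u)\big)$. Substituting the expression above yields
\[
G_{Z(t)}(u)=\exp\Big[\lambda t\textstyle\sum_{j=1}^k\big((g_Y(u))^j-1\big)\Big].
\]
The key observation is then that $(g_Y(u))^j$ is precisely the \textit{pgf} of the $j$-fold convolution $Y_1+\dots+Y_j$, i.e. $(g_Y(u))^j=\sum_{j_1\ge0}q^{(j)}_{j_1}u^{j_1}$, where $q^{(j)}_{j_1}=\mathbb{P}[Y_1+\dots+Y_j=j_1]$.

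Finally, since each convolution \textit{pmf} is normalized, $\sum_{j_1\ge0}q^{(j)}_{j_1}=1$, the constant $-1$ can be absorbed so that $(g_Y(u))^j-1=\sum_{j_1\ge1}q^{(j)}_{j_1}(u^{j_1}-1)$ (the $j_1=0$ term drops out). Interchanging the finite sum over $j$ with the infinite sum over $j_1$ — legitimate because all the series converge absolutely for $|u|\le1$ — and collecting coefficients gives
\[
G_{Z(t)}(u)=\exp\Big[\lambda t\textstyle\sum_{j_1\ge1}\big(q^{(1)}_{j_1}+\dots+q^{(k)}_{j_1}\big)(u^{j_1}-1)\Big],
\]
and writing $\lambda t=\lambda k t\cdot\tfrac1k$ inside the sum produces the stated form. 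The only point requiring genuine care is recognizing $(g_Y(u))^j$ as the convolution generating function and justifying the interchange of summations; the remainder is bookkeeping.
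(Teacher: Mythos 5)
Your proof is correct and follows essentially the same route as the paper's: both express $G_{Z(t)}(u)$ via the compound-Poisson structure as $\exp\left[\lambda t\sum_{j=1}^{k}\left((g_Y(u))^j-1\right)\right]$, identify $(g_Y(u))^j$ with the pgf of the $j$-fold convolution, and absorb the constant via normalization of the convolution pmfs. If anything, yours is slightly more complete, since you derive the opening identity by composing the pgf of $N^{(k)}(t)$ with $g_Y$, a step the paper asserts without justification, and you avoid the paper's cumbersome nested-sum notation for the convolution coefficients $q^{(j)}_{j_1}$.
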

\begin{proof}
	Let $ G_{Z(t)}(u) $ is the \textit{pgf} of $ \{Z(t)\}_{t\geq 0}$, then 
	\begin{eqnarray*}
		G_{Z(t)}(u) = \mathbb{E}[u^{Z(t)}] 
		&=& \exp[\lambda t\{G_Y(u)+\ldots + G_Y^{*k}(u)\} - k\lambda t]\\
		&=& \exp \left[ \lambda t \left\{ \sum_{j_1=0}^{\infty} q_{j_1} u^{j_1} +  \ldots + \sum_{j_1=0}^{\infty} q^{*k}_{j_1} u^{j_1} \right\} -k\lambda t\right]\\
		&=& \exp \left[\lambda t \left\{ \sum_{j_1=0}^{\infty} q_{j_1} u^{j_1} +
		\ldots + 
		 \sum_{j_1=0}^{\infty}\sum_{j_2=0}^{j_1} \ldots \sum_{j_k=0}^{j_{k-1}}q_{j_k}q_{j_{k-1}-j_k}\ldots q_{j_1-j_2}u^{j_1} \right\}-k\lambda t \right] \\
		&& \text{Let us denote}~ q_{j_1}^{(n)} = \prod_{m=1}^{n} \sum_{j_m=0}^{j_{m-1}}q_{j_n}q_{j_{n-1}-j_n}\ldots q_{j_1-j_2},~n=1,\ldots,k. \\
	&=&\exp \left[\lambda kt \sum_{j_1=1}^{\infty}\frac{q_{j_1}^{(1)} +\ldots + q_{j_1}^{(k)}}{k} (u^{j_1}-1) \right].
	\end{eqnarray*}
\end{proof}
\begin{remark}\label{lemma:martingale}
	Set $ \alpha_{j_1} = \frac{q_{j_1}^{(1)} +\ldots + q_{j_1}^{(k)}}{k}, ~j_1=0,1,2,\ldots .$ Substituting $\alpha_{j_1}$ in Lemma \ref{martingale}, we get the following relation
	\begin{equation}\label{distribution}
	D(t) \stackrel{d}{=} \sum_{i=1}^{\infty} iZ_i(t) \stackrel{d}{=} \sum_{i=1}^{N^{(k)}(t)}Y_i,~ t\geq 0,
	\end{equation}
	where $Y_i's$ are non-negative integer valued random variables and $ N^{(k)}(t)$ is $PPoK(\lambda)$. 
\end{remark}
	
	\ifx
	\begin{eqnarray*}
		G_{Z(t)}(u) &=& \mathbb{E}[u^{\sum_{i=1}^{N_k(t)}Y_i}] \\
		&=& \exp[\lambda t\{G_Y(u)+G_Y^{*2}(u)+\ldots + G_Y^{*k}(g)\} - k\lambda t]\\
		&=& \exp \left[ \lambda t \left\{ \sum_{j=1}^{\infty} p_j z^j + \left( \sum_{j=1}^{\infty} p_j z^j\right)^2 \ldots \left( \sum_{j=1}^{\infty} p_j z^j \right)^k \right\} -k\lambda t\right]\\
		&=& \exp \left[\lambda t \left\{ \sum_{j=1}^{\infty} p_j z^j + 
		 \ldots + \sum_{j=1}^{\infty}\sum_{j_2=1}^{j} \ldots \sum_{j_k=1}^{j_{k-1}}p_{j_k}p_{j_{k-1}-j_k}\ldots p_{j-j_2}z^j \right\}-k\lambda t \right] \\
		 & & \text{where}~ p_j^k = \sum_{j_2=1}^{j} \ldots \sum_{j_k=1}^{j_{k-1}}p_{j_k}p_{j_{k-1}-j_k}\ldots p_{j-j_2} \\
		&=& \exp \left[ \lambda t \sum_{j=1}^{\infty} p_j (z^j-1) +
		  \ldots + \lambda t\sum_{j=1}^{\infty} p_{j}^k (z^j-1) \right] \\
		&=& \exp \left[ \lambda t \sum_{j=1}^{\infty}(p_j +p_{j}^2+\ldots + p_{j}^k) (z^j-1) \right] \\
		&=& \exp \left[ \lambda kt \sum_{j=1}^{\infty}\frac{(p_j +p_{j}^2+\ldots + p_{j}^k)}{k} (z^j-1) \right].
	\end{eqnarray*}
	\begin{equation}\label{alpha}
	Set~ \alpha_j = \frac{(p_j +p_{j}^2+\ldots + p_{j}^k)}{k}, ~j=1,2,\ldots.
	\end{equation}
	\fi
	
\begin{theorem}
	Let $\{Z(t)\}_{t\geq 0}$ be $ 
	CPPoK(\lambda,H)$, where $H$ is discrete distribution with support on $\mathbb{Z^+}$, is a $\mathcal{F}_t$ adapted stochastic process \textit{iff} $ M(t) = Z(t) - \frac{k(k+1)}{2}\lambda t \mathbb{E}[Y],~ t\geq 0$ is an $\mathcal{F}_t$ martingale.
\end{theorem}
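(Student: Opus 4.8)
The plan is to prove the two implications separately. The forward (necessity) direction is a direct computation that exploits the fact that $CPPoK(\lambda,H)$ is a L\'evy process, hence has stationary and independent increments; the converse (sufficiency) is the Watanabe-type characterization and will be the substantive part. Throughout I write $c=\frac{k(k+1)}{2}\mathbb{E}[Y]$, so that $M(t)=Z(t)-c\lambda t$.

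For necessity, I would first record that $M(t)$ is integrable and adapted: the mean formula $\mathbb{E}[Z(t)]=\frac{k(k+1)}{2}\lambda t\,\mathbb{E}[Y]$ gives $\mathbb{E}[M(t)]=0$, and $M(t)$ is $\mathcal{F}_t$-adapted by hypothesis. Then, for $0\le s<t$, I would split $Z(t)=Z(s)+(Z(t)-Z(s))$ and use independence of the increment $Z(t)-Z(s)$ from $\mathcal{F}_s$ together with stationarity to obtain $\mathbb{E}[Z(t)-Z(s)\mid\mathcal{F}_s]=\mathbb{E}[Z(t-s)]=\frac{k(k+1)}{2}\lambda(t-s)\mathbb{E}[Y]$. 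Substituting into $\mathbb{E}[M(t)\mid\mathcal{F}_s]=Z(s)+\mathbb{E}[Z(t)-Z(s)\mid\mathcal{F}_s]-c\lambda t$ and cancelling the deterministic drift yields $\mathbb{E}[M(t)\mid\mathcal{F}_s]=M(s)$, so $M$ is an $\mathcal{F}_t$-martingale.

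For sufficiency I would build on the superposition representation of Remark \ref{lemma:martingale}: by \eqref{distribution}, a $CPPoK(\lambda,H)$ with integer-valued summands decomposes as $Z(t)\stackrel{d}{=}\sum_{i\ge1} i\,Z_i(t)$, where the $Z_i$ are independent Poisson processes $PP(k\lambda\alpha_i)$ with $\alpha_i=\frac{q_i^{(1)}+\cdots+q_i^{(k)}}{k}$. Letting $Z_i(t)$ denote the counting process of jumps of $Z$ of size $i$, I would apply the classical Watanabe theorem (see \cite{point}) to each $Z_i$: the martingale property of $M$, combined with the prescribed compound-counting structure of $Z$, forces the compensator of $\sum_i iZ_i$ to equal the linear drift $\frac{k(k+1)}{2}\lambda t\,\mathbb{E}[Y]$, which pins down the intensities $k\lambda\alpha_i$ and identifies each $Z_i$ as $PP(k\lambda\alpha_i)$. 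Reassembling via \eqref{distribution} and Lemma \ref{martingale} then identifies $Z(t)$ as $CPPoK(\lambda,H)$.

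The main obstacle is exactly this converse: a single scalar martingale condition on $Z$ does not by itself separate the contributions of the different jump sizes, so the argument must use the a priori structure of $Z$ (integer jumps with law $H$ on $\mathbb{Z}^+$) to pass to the individual counting processes $Z_i$ before Watanabe's theorem applies. Justifying rigorously that the martingale drift distributes correctly across these components, so that each $Z_i$ inherits the correct constant intensity, is where the care is needed; by contrast, the forward direction and the final reassembly are routine.
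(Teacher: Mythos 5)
Your necessity direction coincides with the paper's proof: adaptedness, the splitting $Z(t)=Z(s)+\bigl(Z(t)-Z(s)\bigr)$, independence and stationarity of increments, and the mean formula $\mathbb{E}[Z(t)]=\frac{k(k+1)}{2}\lambda t\,\mathbb{E}[Y]$ give $\mathbb{E}[M(t)\mid\mathcal{F}_s]=M(s)$. That half is fine.

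The converse, however, has a genuine gap, located exactly at the step you yourself flag as ``where the care is needed.'' You propose to let $Z_i$ be the counting process of size-$i$ jumps of $Z$ and to argue that the martingale property of $M$ forces the compensator of $\sum_i iZ_i$ to be the linear drift $\frac{k(k+1)}{2}\lambda t\,\mathbb{E}[Y]$, thereby ``pinning down'' each intensity $k\lambda\alpha_i$ so that the classical Watanabe theorem applies componentwise. This cannot work as stated: the single real-valued martingale condition only constrains the weighted sum $\sum_i i\,A_i(t)$ of the compensators $A_i$ of the $Z_i$, and it cannot separate the contributions of different jump sizes. Concretely, a process with unit jumps at rate $2\mu$ and a process with jumps of size $2$ at rate $\mu$ both satisfy $\mathbb{E}[Z(t)-2\mu t\mid\mathcal{F}_s]=Z(s)-2\mu s$, yet have different jump-size laws; so no componentwise identification of the $Z_i$ as $PP(k\lambda\alpha_i)$ can be extracted from the scalar condition alone, and the ``reassembly'' step never gets off the ground. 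The paper does not attempt this reduction to the unit-jump Watanabe theorem of \cite{point}: after the distributional identity $\sum_{i\geq 1}iZ_i(t)\stackrel{d}{=}\sum_{i=1}^{N^{(k)}(t)}Y_i$ of Remark \ref{lemma:martingale} and \eqref{distribution}, it invokes a Watanabe-type characterization proved specifically for discrete compound Poisson processes, namely Theorem 5.2 of \cite{discrete}, i.e., it outsources precisely the step your sketch leaves open. To repair your argument you would either need to cite such a result as the paper does, or strengthen the hypotheses you work from (e.g., a martingale/compensator condition for each jump-size counting measure, not just for the single process $M$), since the structure ``integer jumps with law $H$'' cannot be recovered from $M$ being a martingale.
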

	\begin{proof}
		Let $Z(t)$ be the $\mathcal{F}_t$ adapted stochastic process. If $Z(t)$ is a compound Poisson process of order $k$, then 
		\begin{eqnarray*}
			\mathbb{E}[M(t)|\mathcal{F}_s] &=& \mathbb{E}[ Z(t) - \frac{k(k+1)}{2}\lambda t \mathbb{E}[Y]|\mathcal{F}_s],~~0\leq s \leq t.\\
			&=& \mathbb{E}[ Z(t)|\mathcal{F}_s] - \frac{k(k+1)}{2}\lambda t \mathbb{E}[Y]\\
			&=& \mathbb{E}[ Z(t)-Z(s)|\mathcal{F}_s]+ \mathbb{E}[ Z(s)|\mathcal{F}_s]- \frac{k(k+1)}{2}\lambda t \mathbb{E}[Y]\\
			&=& \mathbb{E}[ Z(t)-Z(s)]+ Z(s)- \frac{k(k+1)}{2}\lambda t \mathbb{E}[Y]\\
			&=&  Z(s)- \frac{k(k+1)}{2}\lambda s \mathbb{E}[Y] = M(s).
		\end{eqnarray*}
	Hence, the process $\{M(t)\}_{t\geq 0}$ is $\mathcal{F}_t$ martingale.\\
   Since, in Remark \ref{lemma:martingale}, it is shown that 
$ \sum_{i=1}^{\infty} iZ_i(t) \stackrel{d}{=} \sum_{i=1}^{N^{(k)}(t)}Y_i.$ So the other part easily follows using \cite[Theorem 5.2]{discrete}.
	\end{proof}
	\begin{remark}
		We know that CPP is a L\'evy process and in \eqref{distribution}, it is proved that CPPoK is equal in distribution to $\{D(t)\}_{t\geq 0}$. Hence, CPPoK is also a L\'evy process and hence infinitely divisible.
	\end{remark}
	\begin{remark}
		The characteristic function of $CPPoK(\lambda,H)$ can be written as 
		$$ \mathbb{E}[e^{iwZ(t)}] = \exp[t\sum_{j=1}^{\infty}(e^{iwj}-1) k\lambda \alpha_j],$$
		where, $\alpha_j, j=1,2,\ldots$ are as defined in Remark \ref{lemma:martingale}, and $ k\lambda \alpha_j = \nu_j$ is called the L\'evy measure of $CPPoK(\lambda,H)$.
	\end{remark}
	\vskip 2ex
	\noindent
	\ifx
	We can also prove the infinite divisibility with the help of characteristic function of CPPoK.
	\begin{proposition}
		The process $\{ Z(t)\}_{t\geq 0}$ is infinitely divisible.
		\begin{proof}
			The characteristic function of $Z(t) $ is given by
			\begin{eqnarray*}
				\psi_{Z(t)}(w) &=& \mathbb{E}[e^{iwZ(t)}]\\
				&=& \sum_{n=0}^{\infty} (\psi_Y(w))^n \mathbb{P}[N_k(t)=n]\\
				&=&  \sum_{n=0}^{\infty} (\psi_Y(w))^n \sum_{\substack{x_1,x_2,...,x_k\geq 0\\\sum_{j=1}^{k}jx_j=n}} e^{-k\lambda t} \frac{  (\lambda t) ^{x_1+x_2+..+x_k}}{x_1!x_2!...x_k!} \\
				&=& \exp[-k\lambda t + \lambda t(\psi_Y(w)+\psi^2_Y(w)+\ldots+\psi^k_Y(w))]\\
				&=& \exp[-k\frac{\lambda}{n} t + \frac{\lambda}{n} t(\psi_Y(w)+\psi^2_Y(w)+\ldots+\psi^k_Y(w))]^n\\ 
				&=& [\psi_{V(t)}(w)]^n
			\end{eqnarray*}
			Where $V(t)$ has same distribution as $Z(t)$ with parameter $\lambda/n$. Thus it is proved.
		\end{proof}
	\end{proposition}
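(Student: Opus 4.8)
The plan is to compute the characteristic function of $Z(t)$ in closed form and then exhibit it, for every $n\in\mathbb{N}$, as a perfect $n$-th power of another characteristic function. First I would condition on the value of the PPoK $N^{(k)}(t)$. Writing $\psi_Y(w)=\mathbb{E}[e^{iwY_1}]$ for the common characteristic function of the $Y_i$ and using their independence from $N^{(k)}(t)$, one gets
$$
\psi_{Z(t)}(w)=\mathbb{E}[e^{iwZ(t)}]=\sum_{n=0}^{\infty}\big(\psi_Y(w)\big)^{n}\,\mathbb{P}[N^{(k)}(t)=n].
$$
Substituting the PoK pmf from Definition \ref{Def:pok} turns this into a double sum over $n\ge 0$ and over the compositions $(x_1,\ldots,x_k)$ subject to $\sum_{j=1}^{k} j x_j=n$.

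The second step is to collapse this double sum. Since $(\psi_Y(w))^{n}=\prod_{j=1}^{k}(\psi_Y(w))^{jx_j}$ whenever $n=\sum_{j}jx_j$, summing over all $n$ together with the inner constraint is the same as summing freely over all $(x_1,\ldots,x_k)\in\mathbb{Z}_{\ge 0}^{k}$. The resulting sum factorizes into a product of $k$ exponential series, giving the clean form
$$
\psi_{Z(t)}(w)=\exp\!\Big[\lambda t\big(\psi_Y(w)+\psi_Y^{2}(w)+\cdots+\psi_Y^{k}(w)-k\big)\Big],
$$
where $\psi_Y^{j}(w)$ is the characteristic function of the $j$-fold convolution $Y_1+\cdots+Y_j$. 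To establish infinite divisibility, fix an arbitrary $n\in\mathbb{N}$ and pull the exponent apart as
$$
\psi_{Z(t)}(w)=\Big[\exp\!\Big[\tfrac{\lambda}{n}\,t\big(\psi_Y(w)+\cdots+\psi_Y^{k}(w)-k\big)\Big]\Big]^{n}.
$$
The bracketed factor is precisely the characteristic function of the time-$t$ marginal of a $CPPoK(\lambda/n,H)$ (same distribution $H$, rate $\lambda/n$), which exists by Definition \ref{def:cppok} and is therefore a bona fide characteristic function. Hence $Z(t)$ is equal in distribution to a sum of $n$ IID copies of that marginal for every $n$, which is exactly infinite divisibility.

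The step I expect to require the most care is the reorganization of the double sum into a product of exponential series, i.e. justifying the interchange in the order of summation. This is harmless because $|\psi_Y(w)|\le 1$ forces absolute convergence of every series involved: each term is dominated by the corresponding term of $e^{-k\lambda t}\prod_{j}e^{\lambda t}=1$, so Fubini applies. The genuinely decisive point—what actually delivers the conclusion rather than a mere algebraic factorization—is the recognition in the last display that the bracketed factor is itself the characteristic function of a CPPoK marginal with the smaller rate $\lambda/n$, so it corresponds to an honest random variable. This is fully consistent with the Lévy-process identification in the preceding Remark, but the characteristic-function computation furnishes a direct, self-contained proof.
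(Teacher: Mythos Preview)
Your proposal is correct and follows essentially the same approach as the paper's proof: condition on $N^{(k)}(t)$, substitute the PoK pmf, collapse the sum to obtain $\psi_{Z(t)}(w)=\exp\big[\lambda t\big(\sum_{j=1}^{k}\psi_Y^{j}(w)-k\big)\big]$, and then factor it as the $n$-th power of the characteristic function of a $CPPoK(\lambda/n,H)$ marginal. Your version is slightly more detailed in justifying the interchange of summation, but the argument is otherwise identical to the paper's.
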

	\begin{remark}
		The Laplace Transform (LT) of $Z(t)$ can be expressed as
		\begin{eqnarray} \label{laplace}
		\phi_{Z(t)}(s) = \mathbb{E}[e^{-sZ(t)}]= \exp[-k\lambda t + \lambda t(f(s)+f^2(s)+ \ldots +f^k(s))],
		\end{eqnarray}
		
		where $f(s)$ is the LT of random variable $Y$ and $f^k(s) = (\mathbb{E}[s^{-sY}])^k$.
	\end{remark}
	We can rewrite the LT as
	\begin{eqnarray*}
		\phi_{Z(t)}(s) &=& \exp[-k\lambda t + \lambda t(f(s)+f^2(s)+ \ldots +f^k(s))]\\ &=& \exp[-\lambda t(1-f(s))]\exp[-\lambda t(1-f^2(s))]\ldots \exp[-\lambda t(1-f^k(s))]
	\end{eqnarray*}
	Now, $\exp[-\lambda t(1-f^k(s))]$ is the LT of the compound Poisson process $\sum_{i=1}^{kN_k(t)}Y_i$.\\
	Thus the process $Z(t)$ can be rewritten as 
	$$ Z(t) \stackrel{d}{=} \sum_{i=1}^{N_1(t)}Y_i+\sum_{i=1}^{2N_2(t)}Y_i+\ldots +\sum_{i=1}^{kN_k(t)}Y_i. $$
	\subsection{L\'evy Subordinator} \label{levy}
	A subordinator  $\{D_f(t)\}_{t\geq0}$ is a one-dimensional non-decreasing L\'evy process whose LT can be expressed in the form (see \cite{bertoin})
	$$ \mathbb{E}[e^{-\lambda D_f(t)}] = e^{-tf(\lambda)},~ \lambda>0,$$
	where the function $ f:[0,\infty) \rightarrow [0,\infty) $ is called the Laplace exponent and 
	$$ f(\lambda)=b \lambda+\int_{0}^{\infty}(1-e^{-\lambda x})\nu(dx),~b\geq0, s>0,$$
	Here $b$ is the drift coefficient and $\nu$ is a non-negative L\'evy measure on positive half-line satisfying 
	\begin{equation*}
	\int_{0}^{\infty}(x\wedge 1)\nu(dx)<\infty~~{\rm and}~~\nu([0,\infty))= \infty
	\end{equation*}
	which ensures that the sample paths of $D_{f}(t)$ are almost surely $(a.s.)$  strictly increasing.
	Also the inverse subordinator $\{E_f(t)\}_{t\geq 0}$ is the first exit time of the L\'evy suboridinator $\{D_f(t)\}_{t\geq 0}$, which is defined as 
	\begin{equation*}
	E_{f}(t)=\inf\{r\geq 0:D_{f}(r)>t\},~ t\geq 0.
	\end{equation*}
	\fi
	
	
	\section{Main Results} \label{sec:subordinator}
	\noindent
	In this section, we recall the definitions of L\'evy subordinator and its first exit time. Further, we define the subordinated versions of $CPPoK(\lambda,H)$ 
	and discuss their properties.
	\subsection*{L\'evy Subordinator}\label{Levy}
	A L\'evy subordinator  $\{D_f(t)\}_{t\geq0}$ is a one-dimensional non-decreasing L\'evy process whose Laplace transform (LT) can be expressed in the form (see \cite{bertoin})
	$$ \mathbb{E}[e^{-\lambda D_f(t)}] = e^{-tf(\lambda)},~ \lambda>0,$$
	where the function $ f:[0,\infty) \rightarrow [0,\infty) $ is called the Laplace exponent and 
	$$ f(\lambda)=b \lambda+\int_{0}^{\infty}(1-e^{-\lambda x})\nu(dx),~b\geq0, s>0.$$
	Here $b$ is the drift coefficient and $\nu$ is a non-negative L\'evy measure on positive half-line satisfying 
	\begin{equation*}
	\int_{0}^{\infty}(x\wedge 1)\nu(dx)<\infty~~{\rm and}~~\nu([0,\infty))= \infty,
	\end{equation*}
	which ensures that the sample paths of $D_{f}(t)$ are almost surely $(a.s.)$  strictly increasing.
	Also, the inverse subordinator $\{E_f(t)\}_{t\geq 0}$ is the first exit time of the L\'evy subordinator $\{D_f(t)\}_{t\geq 0}$, and it is defined as 
	\begin{equation*}
	E_{f}(t)=\inf\{r\geq 0:D_{f}(r)>t\},~ t\geq 0.
	\end{equation*}
	\noindent
	Next, we study $CPPoK(\lambda,H)$ by taking subordinator as mixture of tempered stable subordinators (MTSS).
	\subsection{CPPoK time changed by mixtures of tempered stable subordinators}
	The mixtures of tempered stable subordinator (MTSS) $ \{S_{\alpha_1,\alpha_2}^{\mu_1,\mu_2}(t)\}_{t\geq 0}$ is a L\'evy process with LT (see \cite{mixedsubordinator})
	$$ \mathbb{E}[e^{-sS_{\alpha_1,\alpha_2}^{\mu_1,\mu_2}(t)}] = \exp\{-t(c_1((s+\mu_1)^{\alpha_1} - \mu_1^{\alpha_1})+c_2((s+\mu_2)^{\alpha_2} - \mu_2^{\alpha_2}))\} ,~ s>0,$$
	
	\noindent where $c_1+c_2=1, c_1,c_2 \geq 0, \mu_1,\mu_2>0 $ are tempering parameters and $ \alpha_1, \alpha_2 \in (0,1)$ are stability indices. The function $f(s) = c_1((s+\mu_1)^{\alpha_1} - \mu_1^{\alpha_1})+c_2((s+\mu_2)^{\alpha_2} - \mu_2^{\alpha_2}) $ is the Laplace exponent of MTSS.\\
	The mean and variance of MTSS are given as
	\begin{equation}\label{eq:mean}
	\mathbb{E}[S_{\alpha_1,\alpha_2}^{\mu_1,\mu_2}(t)] = t(c_1\alpha_1\mu_1^{\alpha_1-1}+c_2\alpha_2\mu_2^{\alpha_2-1}),
	\end{equation}
	\begin{equation}\label{eq:variance}
	\text{Var}[S_{\alpha_1,\alpha_2}^{\mu_1,\mu_2}(t)]= t(c_1\alpha_1(1-\alpha_1)\mu_1^{\alpha_1-2}+c_2\alpha_2(1-\alpha_2)\mu_2^{\alpha_2-2}).
	\end{equation}
	\begin{definition}
		Let $\{S_{\alpha_1,\alpha_2}^{\mu_1,\mu_2}(t)\}_{t\geq 0}$ be the L\'evy subordinator satisfying $\mathbb{E}[S_{\alpha_1,\alpha_2}^{\mu_1,\mu_2}(t)^{\rho}]< \infty$ for all $\rho >0.$ Then the time-changed $CPPoK(\lambda,H)$, denoted by $TCPPoK(\lambda,H,S_{\alpha_1,\alpha_2}^{\mu_1,\mu_2})$ is defined as
		$$ Z_1(t) = Z(S_{\alpha_1,\alpha_2}^{\mu_1,\mu_2}(t)) = \sum_{i=1}^{N^{(k)}(S_{\alpha_1,\alpha_2}^{\mu_1,\mu_2}(t))} Y_i ,~ t \geq 0,$$
		where $\{Z(t)\}_{t\geq 0}$ is $CPPoK(\lambda,H)$, independent from $ \{S_{\alpha_1,\alpha_2}^{\mu_1,\mu_2}(t)\}_{t\geq 0} $.
	\end{definition}
	
	\begin{remark}
		If $\alpha_1 = \alpha_2=\alpha,$ and $ \mu_1=\mu_2 = 0$, then MTSS becomes $\alpha$-stable subordinator, reducing $Z_1(t)$ to $TCPPoK(\lambda,H,S_{\alpha})$, which we call as space fractional CPPoK and written as
		$$ Z(S_{\alpha}(t)) = \sum_{i=1}^{N^{(k)}(S_{\alpha}(t))} Y_i ,~ t \geq 0. $$
		When $Y_i's \equiv 1$, then $Z(S_{\alpha}(t))$ reduces to space fractional PPoK, denoted as $PPoK(\lambda,S_{\alpha})$.
		It can be seen as extension of space fractional Poisson process (see \cite{sfpp}). 
	\end{remark}
	\begin{remark}
		If $\alpha_1 = \alpha_2=\alpha,$ and $ \mu_1= \mu,\mu_2 = 0$, then MTSS reduces to  tempered $\alpha$- stable subordinator and  $Z_1(t)$ becomes tempered space fractional CPPoK, denoted as $TCPPoK(\lambda,H,S_{\alpha}^{\mu} )$, can be written as 
		$$ Z(S_{\alpha}^{\mu}(t)) = \sum_{i=1}^{N^{(k)}(S_{\alpha}^{\mu}(t))} Y_i ,~ t \geq 0, $$
		where $\mu>0$ is tempering parameter. Substituting $Y_i's \equiv 1$, it becomes tempered space fractional PPoK, denoted as $PPoK(\lambda,S_{\alpha}^{\mu})$.
	\end{remark}

\begin{theorem}
	The finite dimensional distribution of $TCPPoK(\lambda,H,S_{\alpha_1,\alpha_2}^{\mu_1,\mu_2})$ has the following form
	
	\begin{equation}  \label{eq:Fdd}       
	F_{Z_1(t_1),\ldots,Z_1(t_n)}(y_1,\ldots,y_n)=
	\sum_{j_1,\ldots j_n}\prod_{l=1}^{n} q_{j_l}(\Delta t_l)\int_{-\infty}^{v_1} \ldots \int_{-\infty}^{v_n}
	\prod_{m=1}^{n}h^{*j_m}_{Y_1}(x_m) dx_m,
	\end{equation}
   where the summation is taken over all non-negative integers $j_i \geq 0,~ i=1,2,\ldots,n$, $v_k = y_k -\sum_{l=1}^{k-1}x_l, k=1,\ldots,n$, $h$ is the density of $H$, and $q_j(t) = \mathbb{P}[N^{(k)}(D_f(t))=j]$.
\end{theorem}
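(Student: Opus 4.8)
The plan is to reduce this statement to Theorem~\ref{thm:fdd} by conditioning on the subordinator and then averaging over it. Write $S(t)=S_{\alpha_1,\alpha_2}^{\mu_1,\mu_2}(t)$ for brevity and fix a partition $0=t_0\le t_1\le\cdots\le t_n$. Since $\{S(t)\}_{t\ge 0}$ is non-decreasing, almost every realization satisfies $S(t_0)\le S(t_1)\le\cdots\le S(t_n)$, so conditionally on $\{S(t_l)=s_l\}_{l=1}^n$ the vector $(Z_1(t_1),\ldots,Z_1(t_n))$ coincides with $(Z(s_1),\ldots,Z(s_n))$ evaluated at the now-\emph{deterministic} times $s_1\le\cdots\le s_n$. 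Because $\{Z(t)\}_{t\ge 0}$ is independent of $\{S(t)\}_{t\ge 0}$, I can apply Theorem~\ref{thm:fdd} to this conditional law, obtaining
\begin{equation*}
\mathbb{P}[Z_1(t_1)\le y_1,\ldots,Z_1(t_n)\le y_n \mid S(t_l)=s_l,\ l\le n]
=\sum_{j_1,\ldots j_n}\prod_{l=1}^{n} p_{j_l}(\Delta s_l)\int_{-\infty}^{v_1}\!\!\cdots\!\int_{-\infty}^{v_n}\prod_{m=1}^{n}h^{*j_m}_{Y_1}(x_m)\,dx_m,
\end{equation*}
where $\Delta s_l=s_l-s_{l-1}$. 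The crucial observation is that the inner multiple integral depends only on the indices $j_1,\ldots,j_n$ and the thresholds $y_m$, not on the times; all dependence on the subordinator is carried by the factors $p_{j_l}(\Delta s_l)$.

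Next I would take the expectation over $\{S(t)\}_{t\ge 0}$. Writing $\Delta S_l=S(t_l)-S(t_{l-1})$ and moving the expectation inside the sum and the integrals (legitimate by Tonelli, since all integrands are non-negative), the only term to be averaged is $\prod_{l=1}^{n} p_{j_l}(\Delta S_l)$. At this point I invoke the two defining features of a L\'evy subordinator. Independence of increments yields
\begin{equation*}
\mathbb{E}\Big[\prod_{l=1}^{n} p_{j_l}(\Delta S_l)\Big]=\prod_{l=1}^{n}\mathbb{E}\big[p_{j_l}(\Delta S_l)\big],
\end{equation*}
and stationarity of increments gives $\Delta S_l\stackrel{d}{=}S(\Delta t_l)$, so that, using $p_{j_l}(s)=\mathbb{P}[N^{(k)}(s)=j_l]$ and averaging over the law of $S(\Delta t_l)$,
\begin{equation*}
\mathbb{E}\big[p_{j_l}(\Delta S_l)\big]=\mathbb{E}\big[p_{j_l}(S(\Delta t_l))\big]=\mathbb{P}[N^{(k)}(S(\Delta t_l))=j_l]=q_{j_l}(\Delta t_l).
\end{equation*}
Substituting this back into the averaged conditional FDD reproduces \eqref{eq:Fdd} exactly.

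The genuinely new ingredient relative to Theorem~\ref{thm:fdd} is the passage through the subordinator, and so the main point to handle carefully is the factorization $\mathbb{E}[\prod_l p_{j_l}(\Delta S_l)]=\prod_l q_{j_l}(\Delta t_l)$: this is where the independent and stationary increments of $S$ do all the work, and where one must also use that $\{Y_i\}_{i\ge 1}$ and $\{N^{(k)}(t)\}_{t\ge 0}$ are jointly independent of $\{S(t)\}_{t\ge 0}$ so that conditioning on $S$ leaves the $Y$-integrals untouched. The remaining technical obstacle is the justification of the interchange of expectation, summation and integration; since every factor $p_{j_l}$, $q_{j_l}$ and $h^{*j_m}_{Y_1}$ is non-negative, Tonelli's theorem applies directly and no dominated-convergence estimate is needed. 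Everything else is a verbatim repetition of the computation in the proof of Theorem~\ref{thm:fdd}.
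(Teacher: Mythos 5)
Your proposal is correct and takes essentially the same route as the paper: the paper's proof is a one-line reduction to the proof of Theorem~\ref{thm:fdd}, and your argument---conditioning on the subordinator, applying Theorem~\ref{thm:fdd} at the deterministic conditioned times, then averaging and using the independence and stationarity of the increments of $S_{\alpha_1,\alpha_2}^{\mu_1,\mu_2}$ to obtain $\mathbb{E}\bigl[\prod_{l} p_{j_l}(\Delta S_l)\bigr]=\prod_{l} q_{j_l}(\Delta t_l)$---is precisely the detail that reduction suppresses. Your version additionally makes explicit the measure-theoretic justification (Tonelli) and the role of the independence of $\{Z(t)\}_{t\geq 0}$ from the subordinator, which the paper leaves implicit.
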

\begin{proof}
	The result easily follows from the proof of Theorem \ref{thm:fdd}.
\end{proof}
	\ifx
	\begin{proposition} Let $\{Z(S_{\alpha_1,\alpha_2}^{\mu_1,\mu_2}(t))\}_{t\geq 0}$ be the CPPoK time changed with MTSS. Then the mean and covariance function of this process is given as
		\begin{enumerate}[(i)]
			\item $\mathbb{E}[Z(S_{\alpha_1,\alpha_2}^{\mu_1,\mu_2}(t))]= \frac{k(k+1)}{2} \lambda \mathbb{E}[Y] \mathbb{E}[S_{\alpha_1,\alpha_2}^{\mu_1,\mu_2}(t)].$
			\item $ Cov[Z(S_{\alpha_1,\alpha_2}^{\mu_1,\mu_2}(s)),Z(S_{\alpha_1,\alpha_2}^{\mu_1,\mu_2}(t))] = \frac{k(k+1)}{2}\lambda \text{Var}[Y]\mathbb{E}[Z(S_{\alpha_1,\alpha_2}^{\mu_1,\mu_2}(s))] +  \frac{k(k+1)(2k+1)}{6}\lambda \mathbb{E}[Y]^2\\  \mathbb{E}[Z(S_{\alpha_1,\alpha_2}^{\mu_1,\mu_2}(s))] +  ( \frac{k(k+1)}{2} \lambda \mathbb{E}[Y])^2 \text{Var}[Z(S_{\alpha_1,\alpha_2}^{\mu_1,\mu_2}(s))].$
		\end{enumerate}
	\end{proposition}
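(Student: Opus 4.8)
The plan is to prove both identities by conditioning on the subordinator and exploiting the fact that both $\{Z(t)\}_{t\ge 0}$ and the MTSS are L\'evy processes with independent increments. For brevity write $S(t):=S_{\alpha_1,\alpha_2}^{\mu_1,\mu_2}(t)$. The starting point is the pair of moment formulas already recorded for $CPPoK(\lambda,H)$, namely $\mathbb{E}[Z(u)]=\tfrac{k(k+1)}{2}\lambda\,\mathbb{E}[Y]\,u$ and $\text{Var}[Z(u)]=\tfrac{k(k+1)}{2}\lambda\,\text{Var}(Y)\,u+\tfrac{k(k+1)(2k+1)}{6}\lambda\,\mathbb{E}[Y]^2\,u$, valid for every fixed $u\ge 0$. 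The integrability hypothesis $\mathbb{E}[S(t)^{\rho}]<\infty$ built into the definition of $TCPPoK$ ensures all moments below are finite, so the tower property and the Fubini-type interchanges are justified. For part $(i)$ I would condition on $S(t)$ and use the independence of $\{Z(t)\}$ from $S$: substituting $u=S(t)$ into the CPPoK mean and taking expectations gives $\mathbb{E}[Z(S(t))]=\mathbb{E}\big[\tfrac{k(k+1)}{2}\lambda\,\mathbb{E}[Y]\,S(t)\big]=\tfrac{k(k+1)}{2}\lambda\,\mathbb{E}[Y]\,\mathbb{E}[S(t)]$, which is the asserted expression.

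For part $(ii)$, fix $0\le s\le t$ and compute the mixed second moment $\mathbb{E}[Z(S(s))Z(S(t))]$ by conditioning on the pair $(S(s),S(t))$. Because the subordinator is non-decreasing, $S(s)\le S(t)$ almost surely, and because $\{Z(t)\}$ has independent increments one has, for deterministic $u\le v$, the elementary identity $\mathbb{E}[Z(u)Z(v)]=\text{Var}[Z(u)]+\mathbb{E}[Z(u)]\,\mathbb{E}[Z(v)]$ (split $Z(v)=Z(u)+(Z(v)-Z(u))$ and use that the increment is independent of $Z(u)$ with mean $\mathbb{E}[Z(v)]-\mathbb{E}[Z(u)]$). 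Substituting $u=S(s),\,v=S(t)$, inserting the CPPoK moment formulas, and averaging over $(S(s),S(t))$ produces
\begin{align*}
\mathbb{E}[Z(S(s))Z(S(t))]=&\Big(\tfrac{k(k+1)}{2}\lambda\,\text{Var}(Y)+\tfrac{k(k+1)(2k+1)}{6}\lambda\,\mathbb{E}[Y]^2\Big)\mathbb{E}[S(s)]\\
&+\Big(\tfrac{k(k+1)}{2}\lambda\,\mathbb{E}[Y]\Big)^2\mathbb{E}[S(s)S(t)].
\end{align*}
Subtracting $\mathbb{E}[Z(S(s))]\,\mathbb{E}[Z(S(t))]=\big(\tfrac{k(k+1)}{2}\lambda\,\mathbb{E}[Y]\big)^2\mathbb{E}[S(s)]\,\mathbb{E}[S(t)]$ then collapses the two mean-type contributions into $\big(\tfrac{k(k+1)}{2}\lambda\,\mathbb{E}[Y]\big)^2\text{Cov}(S(s),S(t))$, and regrouping the three coefficients gives the stated covariance.

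The step I expect to be the main obstacle is the cross term: unlike $\mathbb{E}[Z(S(s))]\,\mathbb{E}[Z(S(t))]$, the quantity $\mathbb{E}[S(s)S(t)]$ does not factor, since $S(s)$ and $S(t)$ are dependent. The resolution is to invoke a second layer of independent-increment structure, this time for the MTSS itself: as $S$ is a L\'evy process, $\text{Cov}(S(s),S(t))=\text{Var}(S(s))$ for $0\le s\le t$. Feeding in $\mathbb{E}[S(s)]$ and $\text{Var}(S(s))$ from \eqref{eq:mean} and \eqref{eq:variance} (with $t$ replaced by $s$) yields the covariance in closed form. I would also emphasize that the three summands are weighted by moments of the \emph{subordinator} evaluated at the smaller time $s$, namely $\mathbb{E}[S(s)]$, $\mathbb{E}[S(s)]$ and $\text{Var}(S(s))$ respectively; the factor $\mathbb{E}[S(s)]$ in the first two terms and $\text{Var}(S(s))$ in the third are precisely the quantities that make the dispersion analysis of the time-changed process transparent, and care should be taken that these are moments of $S(s)$ rather than of $Z(S(s))$.
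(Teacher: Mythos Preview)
Your argument is correct and is essentially the paper's own proof (given there as Theorem~\ref{thm:dis}): condition on the subordinator, use the independent-increment identity $\mathbb{E}[Z(u)Z(v)]=\text{Var}[Z(u)]+\mathbb{E}[Z(u)]\mathbb{E}[Z(v)]$ for $u\le v$, average, subtract the product of means, and invoke $\text{Cov}(S(s),S(t))=\text{Var}(S(s))$. You are also right to flag the slip in the displayed Proposition---the three weights should indeed be $\mathbb{E}[S(s)]$, $\mathbb{E}[S(s)]$ and $\text{Var}(S(s))$ (moments of the subordinator), not $\mathbb{E}[Z(S(s))]$ and $\text{Var}[Z(S(s))]$; the paper's final version states the result in the corrected form $\text{Cov}[Z_1(s),Z_1(t)]=\mathbb{E}[Z(1)]^2\,\text{Var}[S(s)]+\mathbb{E}[S(s)]\,\text{Var}[Z(1)]$.
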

	\noindent	
	On putting $s=t$ in $(ii)$, we get variance.
	\noindent
	Now, we evaluate $ \text{Var}[Z(S_{\alpha_1,\alpha_2}^{\mu_1,\mu_2}(t))] - \mathbb{E}[Z(S_{\alpha_1,\alpha_2}^{\mu_1,\mu_2}(t))]$,
	\begin{eqnarray*}
		&=&   \frac{k(k+1)}{2} \lambda \mathbb{E}[S_{\alpha_1,\alpha_2}^{\mu_1,\mu_2}(t)]\left[\text{Var}[Y]-\mathbb{E}[Y]+ \frac{2k+1}{3}\mathbb{E}[Y]^2 \right] +
		\\ && \left(\frac{k(k+1)}{2} \lambda \mathbb{E}[Y]\right)^2 \text{Var}[S_{\alpha_1,\alpha_2}^{\mu_1,\mu_2}(t)]\\ &>& 0.
	\end{eqnarray*}
	Hence it exhibits overdispersion.\\
	\fi

	\vskip 2ex
	\noindent
	Now, we present some distributional properties of $TCPPoK(\lambda,H,S_{\alpha_1,\alpha_2}^{\mu_1,\mu_2})$.
	\begin{theorem}\label{thm:dis}
		Let $0<s\leq t<\infty$. Then the mean and covariance function of $TCPPoK(\lambda,H,S_{\alpha_1,\alpha_2}^{\mu_1,\mu_2})$ are given as
		\begin{enumerate}[(i)]
			\item $\mathbb{E}[Z_1(t)]= \frac{k(k+1)}{2} \lambda \mathbb{E}[Y] \mathbb{E}[S_{\alpha_1,\alpha_2}^{\mu_1,\mu_2}(t)]$,
			\item $ Cov[Z_1(s),Z_1(t)] = \mathbb{E}[Z(1)]^2 \text{Var}[S_{\alpha_1,\alpha_2}^{\mu_1,\mu_2}(s)]+\mathbb{E}[S_{\alpha_1,\alpha_2}^{\mu_1,\mu_2}(s)]\text{Var}[Z(1)].$
		\end{enumerate}
	\end{theorem}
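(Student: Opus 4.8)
The plan is to condition on the subordinator $\{S_{\alpha_1,\alpha_2}^{\mu_1,\mu_2}(t)\}_{t\ge 0}$ (abbreviated $S$ below) throughout, and to exploit two structural facts already available: that $\{Z(t)\}_{t\ge 0}$ is a L\'evy process with $\mathbb{E}[Z(t)] = t\,\mathbb{E}[Z(1)]$ and $\text{Var}[Z(t)] = t\,\text{Var}[Z(1)]$ (both linear in $t$, as seen from the mean and variance formulas for $CPPoK(\lambda,H)$), and that $Z$ is independent of $S$. For part (i) I would apply the tower property: writing $\mathbb{E}[Z_1(t)] = \mathbb{E}\big[\,\mathbb{E}[Z(S_{\alpha_1,\alpha_2}^{\mu_1,\mu_2}(t))\mid S]\,\big]$ and using $\mathbb{E}[Z(u)] = \tfrac{k(k+1)}{2}\lambda\mathbb{E}[Y]\,u$ gives $\mathbb{E}[Z_1(t)] = \tfrac{k(k+1)}{2}\lambda\mathbb{E}[Y]\,\mathbb{E}[S_{\alpha_1,\alpha_2}^{\mu_1,\mu_2}(t)]$ immediately; the moment assumption $\mathbb{E}[S_{\alpha_1,\alpha_2}^{\mu_1,\mu_2}(t)^{\rho}]<\infty$ in the definition guarantees finiteness of all quantities involved.

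For part (ii) the cleanest route is the law of total covariance, conditioning on the entire path of $S$,
\[
Cov[Z_1(s),Z_1(t)] = \mathbb{E}\big[Cov(Z_1(s),Z_1(t)\mid S)\big] + Cov\big(\mathbb{E}[Z_1(s)\mid S],\,\mathbb{E}[Z_1(t)\mid S]\big).
\]
Given $S$, we have $Z_1(s) = Z(S(s))$ and $Z_1(t) = Z(S(t))$ with $S(s)\le S(t)$ by monotonicity of the subordinator, so the independent–stationary–increments property of the L\'evy process $Z$ yields $Cov(Z(S(s)),Z(S(t))\mid S) = \text{Var}[Z(S(s))\mid S] = \text{Var}[Z(1)]\,S(s)$, together with $\mathbb{E}[Z(S(r))\mid S] = \mathbb{E}[Z(1)]\,S(r)$ for $r\in\{s,t\}$. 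Taking the outer expectation of the first term gives $\text{Var}[Z(1)]\,\mathbb{E}[S_{\alpha_1,\alpha_2}^{\mu_1,\mu_2}(s)]$, while the second term reduces to $\mathbb{E}[Z(1)]^2\,Cov(S(s),S(t))$.

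The final ingredient is that the subordinator is itself a L\'evy process, so for $0<s\le t$ one has $Cov[S_{\alpha_1,\alpha_2}^{\mu_1,\mu_2}(s),S_{\alpha_1,\alpha_2}^{\mu_1,\mu_2}(t)] = \text{Var}[S_{\alpha_1,\alpha_2}^{\mu_1,\mu_2}(s)]$, obtained by decomposing $S(t) = S(s) + (S(t)-S(s))$ into independent pieces. Substituting this produces exactly $Cov[Z_1(s),Z_1(t)] = \mathbb{E}[Z(1)]^2\,\text{Var}[S_{\alpha_1,\alpha_2}^{\mu_1,\mu_2}(s)] + \mathbb{E}[S_{\alpha_1,\alpha_2}^{\mu_1,\mu_2}(s)]\,\text{Var}[Z(1)]$. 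I expect the only genuinely delicate point to be the conditional-increment step: one must justify that, conditionally on $S$, the increment $Z(S(t))-Z(S(s))$ is independent of $Z(S(s))$ and distributed as $Z(S(t)-S(s))$ — this is precisely where both the independence of $Z$ from $S$ and the a.s.\ monotonicity $S(s)\le S(t)$ are essential, rather than a routine manipulation.
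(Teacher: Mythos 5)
Your proof is correct and follows essentially the same route as the paper's: condition on the subordinator path, use that $\mathbb{E}[Z(u)]=u\,\mathbb{E}[Z(1)]$ and $\mathrm{Var}[Z(u)]=u\,\mathrm{Var}[Z(1)]$ together with the independent increments of $Z$, and finally reduce $Cov[S_{\alpha_1,\alpha_2}^{\mu_1,\mu_2}(s),S_{\alpha_1,\alpha_2}^{\mu_1,\mu_2}(t)]$ to $\mathrm{Var}[S_{\alpha_1,\alpha_2}^{\mu_1,\mu_2}(s)]$ via the L\'evy property of the subordinator. The only difference is organizational --- you invoke the law of total covariance where the paper computes $\mathbb{E}[Z_1(s)Z_1(t)]$ directly and subtracts the means --- and your write-up actually makes explicit the conditional-increment step that the paper leaves implicit.
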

	\begin{proof} Let $g_f(y,t)$ be the \textit{pdf} of L\'evy subordinator $\{S_{\alpha_1,\alpha_2}^{\mu_1,\mu_2}(t)\}_{t\geq 0}$. Then
		$$ \mathbb{E}[Z_1(t)]= \mathbb{E}[\mathbb{E}[Z(S_{\alpha_1,\alpha_2}^{\mu_1,\mu_2}(t))|S_{\alpha_1,\alpha_2}^{\mu_1,\mu_2}(t)]] = \frac{k(k+1)}{2} \lambda \mathbb{E}[Y] \mathbb{E}[S_{\alpha_1,\alpha_2}^{\mu_1,\mu_2}(t)]. $$
		Hence, part (i) is proved.\\
		\noindent
		Now by using conditioning argument we can write 
		$$\mathbb{E}[Z_1(s)Z_1(t)] = \mathbb{E}[S_{\alpha_1,\alpha_2}^{\mu_1,\mu_2}(s)S_{\alpha_1,\alpha_2}^{\mu_1,\mu_2}(t)]\mathbb{E}[Z(1)]^2+\mathbb{E}[S_{\alpha_1,\alpha_2}^{\mu_1,\mu_2}(s)]\text{Var}[Z(1)] $$
		Therefore, we get
		\begin{align*}
		\mbox{Cov}[Z_1(s),Z_1(t)] =& \mathbb{E}[Z_1(s)Z_1(t)] - \mathbb{E}[Z_1(s)]\mathbb{E}[Z_1(t)]\\
		=&\mathbb{E}[Z_1(s)Z_1(t)]-\mathbb{E}[Z(1)]^2\mathbb{E}[S_{\alpha_1,\alpha_2}^{\mu_1,\mu_2}(s)]\mathbb{E}[S_{\alpha_1,\alpha_2}^{\mu_1,\mu_2}(t)]\\
		=& \mathbb{E}[Z(1)]^2 \text{Var}[S_{\alpha_1,\alpha_2}^{\mu_1,\mu_2}(s)]+\mathbb{E}[S_{\alpha_1,\alpha_2}^{\mu_1,\mu_2}(s)]\text{Var}[Z(1)].
		\end{align*}
		which completes the proof. On putting $s=t$ in part (ii), we can get the expression for variance of $TCPPoK(\lambda,H,S_{\alpha_1,\alpha_2}^{\mu_1,\mu_2})$.
	\end{proof}
\noindent
	Now, we discuss the index of dispersion for $TCPPoK(\lambda,H,S_{\alpha_1,\alpha_2}^{\mu_1,\mu_2})$. For this, we evaluate 
	\begin{align*}
	\text{Var}[Z_1(t)]-\mathbb{E}[Z_1(t)] =& \mathbb{E}[Z(1)]^2 \text{Var}[S_{\alpha_1,\alpha_2}^{\mu_1,\mu_2}(t)]+\mathbb{E}[S_{\alpha_1,\alpha_2}^{\mu_1,\mu_2}(t)]\left\{\text{Var}[Z(1)]-\mathbb{E}[Z(1)]\right\}.
	\end{align*}
	Since $\{S_{\alpha_1,\alpha_2}^{\mu_1,\mu_2}(t)\}_{t\geq 0}$ is a L\'evy subordinator, therefore $\mathbb{E}[S_{\alpha_1,\alpha_2}^{\mu_1,\mu_2}(t)]>0$. Thus the following cases arises:
	\begin{enumerate}
		\item [$(i)$] If $Z(1)$ is over/equidispersed, then $TCPPoK(\lambda,H,S_{\alpha_1,\alpha_2}^{\mu_1,\mu_2})$ exhibits overdispersion.
			\item [$(ii)$] If $Z(1)$ is underdispersed then
			\begin{enumerate}
				\item [$(a)$] if $\mathbb{E}[Z(1)]^2 \text{Var}[S_{\alpha_1,\alpha_2}^{\mu_1,\mu_2}(t)]> \mathbb{E}[S_{\alpha_1,\alpha_2}^{\mu_1,\mu_2}(t)]\left\{\mathbb{E}[Z(1)-\text{Var}[Z(1)]]\right\} $, then $Z_1(t)$ shows overdispersion. 
				\item [$(b)$] if $\mathbb{E}[Z(1)]^2 \text{Var}[S_{\alpha_1,\alpha_2}^{\mu_1,\mu_2}(t)]< \mathbb{E}[S_{\alpha_1,\alpha_2}^{\mu_1,\mu_2}(t)]\left\{\mathbb{E}[Z(1)-\text{Var}[Z(1)]]\right\} $, then $Z_1(t)$ shows underdispersion. 
				\item [$(c)$] if $\mathbb{E}[Z(1)]^2 \text{Var}[S_{\alpha_1,\alpha_2}^{\mu_1,\mu_2}(t)]= \mathbb{E}[S_{\alpha_1,\alpha_2}^{\mu_1,\mu_2}(t)]\left\{\mathbb{E}[Z(1)-\text{Var}[Z(1)]]\right\} $, then $Z_1(t)$ shows equidispersion. 
			\end{enumerate}
	\end{enumerate}

	\ifx
	\begin{theorem} 
		The process 
		$$ Z_f(t) =  \sum_{i=1}^{N^{(k)}(D_f(t))} Y_i ~,~ t\geq 0,$$
		where, $\{N^{(k)}(t)\}_{t\geq 0}$ is the PPoK that is independent of $\{D_f(t)\}_{t\geq 0}$ has the following finite dimensional distribution.
		\begin{align} \label{eq:fdd}
		\mathbb{P}[Z_f(t_1)\leq y_1,\ldots, Z_f(t_n)\leq y_n ] =& \prod_{k=1}^{n-1}\int_{-\infty}^{v_k} \sum_{j_1,j_2,\ldots j_n} F^{*{j_n}}(v_n)
		\prod_{l=1}^{n} \mathbb{P}[N^{(k)}(D_f(t_l-t_{l-1}))=j_l]\\
		& \prod_{m=1}^{n-1} f^{*j_m}_{Y_1}(x_m) dx_m, \nonumber
		\end{align}
		where the summation is taken over all non-negative integers $j_i's \geq 0,~ i=1,2,\ldots,n$ and $ v_n = y_n -\sum_{l=1}^{n-1}x_l $.
		
	\end{theorem}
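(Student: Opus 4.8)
The plan is to mimic the proof of Theorem~\ref{thm:fdd} almost verbatim, the single new ingredient being that the unsubordinated counting process is replaced by its time-changed version. The decisive structural fact is that $\{N^{(k)}(D_f(t))\}_{t\ge 0}$ still has independent and stationary increments. Indeed, the remark following the martingale characterization shows that $CPPoK(\lambda,H)$, and in particular $PPoK(\lambda)=\{N^{(k)}(t)\}_{t\ge 0}$, is a L\'evy process; since $\{D_f(t)\}_{t\ge 0}$ is a L\'evy subordinator independent of $\{N^{(k)}(t)\}_{t\ge 0}$, the subordinated process $\{N^{(k)}(D_f(t))\}_{t\ge 0}$ is again a L\'evy process and hence possesses stationary, independent increments. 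Writing $q_{j}(t)=\mathbb{P}[N^{(k)}(D_f(t))=j]$, this permits the decomposition $N^{(k)}(D_f(t_i))=\sum_{l=1}^{i}N^{(k)}(D_f(\Delta t_l))$ over a partition $0=t_0\le t_1\le\cdots\le t_n$ with $\Delta t_l=t_l-t_{l-1}$, exactly as in Theorem~\ref{thm:fdd}.

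With this in hand, I would first rewrite each event $\{Z_f(t_i)\le y_i\}$ through the increment counts and condition on $\{N^{(k)}(D_f(\Delta t_l))=j_l\}_{l=1}^{n}$, summing over all $(j_1,\dots,j_n)$ with $j_i\ge 0$. Because $\{Y_i\}_{i\ge 1}$ is independent of the (subordinated) counting process, the joint probability factors into $\prod_{l=1}^{n}q_{j_l}(\Delta t_l)$ multiplied by the joint law of the cumulative blocks $Y(j_l)$, where $Y(j_1)=\sum_{i=1}^{j_1}Y_i$ and, in general, $Y(j_m)$ is the sum of the $j_m$ variables indexed by the $m$-th block, so that conditionally $Z_f(t_i)\stackrel{d}{=}\sum_{l=1}^{i}Y(j_l)$.

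It then remains to express the joint law of $\bigl(Y(j_1),\,Y(j_1)+Y(j_2),\dots,\sum_{l=1}^{n}Y(j_l)\bigr)$ as an iterated integral. Since the blocks are independent and $Y(j_m)$ has density/pmf $f^{*j_m}_{Y_1}$, the constraints $\sum_{l=1}^{i}Y(j_l)\le y_i$ convert into the nested upper limits $v_i=y_i-\sum_{l=1}^{i-1}x_l$; integrating the densities $f^{*j_m}_{Y_1}(x_m)$ for $m=1,\dots,n-1$ and collapsing the final integration over the $n$-th block into the convolved CDF $F^{*j_n}(v_n)$ reproduces the asserted formula. The main obstacle is the structural claim that the subordinated counting process keeps independent, stationary increments; once that is granted from the L\'evy property established earlier, the only remaining work is the careful bookkeeping of integration limits, which is identical to the computation already carried out for $\{Z(t)\}_{t\ge 0}$ in Theorem~\ref{thm:fdd}.
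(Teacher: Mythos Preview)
Your proposal is correct and follows precisely the route taken in the paper: the paper's own proof is the one-line remark that ``the result easily follows from the proof of Theorem~\ref{thm:fdd}.'' You have simply made explicit the one structural fact that justifies this appeal, namely that $\{N^{(k)}(D_f(t))\}_{t\ge 0}$ inherits stationary independent increments because a L\'evy process subordinated by an independent L\'evy subordinator is again L\'evy; after that the conditioning and the nested-integral bookkeeping are identical to Theorem~\ref{thm:fdd}, exactly as you say.
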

	\fi
	\ifx
	\noindent
	Earlier we gave the martingale characterization for the CPPoK. Now, we extend the well known Watanabe theorem for the CPPoK-I.
	\begin{theorem}[Martingale characterization for time-changed CPPoK]
		Let $\{X(t)\}_{t\geq 0}$ be the point process and $\{D_f(t)\}_{t\geq 0}$ be the L\'evy subordinator with right continuous inverse $\{E_f(t)\}_{t\geq 0}$. Then the process $X(t)$ is subordinated CPPoK \textit{iff} the process $ \{X(t) - \frac{k(k+1)}{2}\lambda \mathbb{E}[Y]D_f(t) \}_{t\geq 0}$ is an $\mathcal{F}_t$, (where $\mathcal{F}_t=\sigma(Z(s), 0 \leq s\leq t)\wedge \sigma(D_f(s)) $) martingale.
		\begin{proof}
			Let $\{X(t)\}_{t\geq 0}$ be the time-changed CPPoK, then $ X(t)\geq 0$. By using \cite[Theorem 2]{fractionalpoissonfields}, we can prove this theorem.
		\end{proof}
	\end{theorem}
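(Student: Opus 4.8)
The plan is to reduce this time-changed characterization to the ordinary Watanabe-type characterization for $CPPoK(\lambda,H)$ already proved above, and then to transport the martingale property across the random time change $D_f$ using the independence of $\{D_f(t)\}_{t\ge 0}$ and $\{Z(t)\}_{t\ge 0}$. Throughout I would write $c=\frac{k(k+1)}{2}\lambda\mathbb{E}[Y]=\mathbb{E}[Z(1)]$, so that by the CPPoK martingale theorem the compensated base process $M_0(t)=Z(t)-ct$ is a martingale for the natural filtration $\mathcal{G}_t=\sigma(Z(s):0\le s\le t)$ precisely when $Z$ is $CPPoK(\lambda,H)$.

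For necessity, assume $X(t)=Z(D_f(t))$ is a subordinated CPPoK. First I would verify integrability: since $\mathbb{E}|Z(u)|$ grows linearly in $u$ for the compound Poisson structure, conditioning on $D_f(t)$ and using the moment hypothesis $\mathbb{E}[D_f(t)]<\infty$ gives $\mathbb{E}|X(t)|<\infty$, so $M(t)=X(t)-c\,D_f(t)$ is integrable. Because $D_f$ is independent of $Z$, non-decreasing and right-continuous, the family $t\mapsto D_f(t)$ is an increasing family of stopping times for $\mathcal{G}$ enlarged by $\sigma(D_f(r):r\ge 0)$. Applying the optional-sampling/time-change argument for a martingale evaluated along an independent subordinator --- which is exactly the content of \cite[Theorem 2]{fractionalpoissonfields} --- the compensated base martingale evaluated along $D_f$,
\[
M(t)=M_0(D_f(t))=Z(D_f(t))-c\,D_f(t),
\]
is an $\mathcal{F}_t$-martingale.

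For sufficiency, suppose conversely that $M(t)=X(t)-c\,D_f(t)$ is an $\mathcal{F}_t$-martingale. I would condition on the whole subordinator path, i.e.\ work under $\sigma(D_f(s):s\ge 0)\subseteq\mathcal{F}_t$. Since the L\'evy-measure condition $\nu([0,\infty))=\infty$ forces $D_f$ to be a.s.\ strictly increasing, it admits the right-continuous inverse $E_f$, and the martingale identity $\mathbb{E}[M(t)-M(s)\mid\mathcal{F}_s]=0$ rewrites as $\mathbb{E}[X(t)-X(s)\mid\mathcal{F}_s]=c\,(D_f(t)-D_f(s))$. Setting $\tilde Z(u)=X(E_f(u))$ and substituting $u=D_f(t)$ turns this into the statement that $\tilde Z(u)-cu$ is a martingale in the intrinsic clock $u$; the CPPoK Watanabe theorem then identifies $\tilde Z$ as $CPPoK(\lambda,H)$, so that $X(t)=\tilde Z(D_f(t))$ is a subordinated CPPoK. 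Again the rigorous passage between the two clocks is supplied by \cite[Theorem 2]{fractionalpoissonfields}.

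The hard part will be the rigorous transfer of the martingale property across the random time change in both directions: one must justify treating $D_f(t)$ as a legitimate family of stopping times for the enlarged filtration and applying optional sampling even though $D_f$ has unboundedly many jumps, and, for the converse, check that inverting the clock via $E_f$ preserves adaptedness and the martingale increments. These measurability and uniform-integrability issues are precisely what \cite[Theorem 2]{fractionalpoissonfields} addresses, so the remaining work is to verify that $CPPoK(\lambda,H)$ with compensator $ct$ meets the hypotheses of that theorem.
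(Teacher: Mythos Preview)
Your proposal is correct and follows essentially the same route as the paper: the paper's proof consists of the single observation that $X(t)\ge 0$ together with an appeal to \cite[Theorem 2]{fractionalpoissonfields}, and you have simply unpacked how that citation is meant to be used---reducing to the base $CPPoK$ Watanabe characterization and then transferring the martingale property across the independent time change $D_f$ (and back via $E_f$) exactly as that referenced theorem permits. Your write-up is considerably more detailed than the paper's one-line proof, but the underlying argument is the same.
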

\fi
\ifx
\vskip 2ex
	\noindent
	Next, we study the CPPoK-I by taking subordinator as mixture of tempered stable subordinators.
	\subsection{CPPoK time changed by mixtures of tempered stable subordinators}
	The mixtures of tempered stable subordinator (MTSS) $ S_{\alpha_1,\alpha_2}^{\mu_1,\mu_2}(t)$ is a L\'evy process with LT (see \cite{mixedsubordinator})
	$$ \mathbb{E}[e^{-sS_{\alpha_1,\alpha_2}^{\mu_1,\mu_2}(t)}] = e^{-t(c_1((s+\lambda_1)^{\alpha_1} - \lambda_1^{\alpha_1})+c_2((s+\lambda_2)^{\alpha_2} - \lambda_2^{\alpha_2}))} ,~ s>0,$$
	
	\noindent where $c_1+c_2=1, c_1,c_2 \geq 0, \lambda_1,\lambda_2>0 $ are tempering parameter and $ \alpha_1, \alpha_2 \in (0,1)$ are stability index. The function $f(s) = c_1((s+\lambda_1)^{\alpha_1} - \lambda_1^{\alpha_1})+c_2((s+\lambda_2)^{\alpha_2} - \lambda_2^{\alpha_2}) $ is the Lapalce exponent of MTSS.\\
	The mean and variance of MTSS are given as
	\begin{equation}\label{eq:mean}
	 \mathbb{E}[S_{\alpha_1,\alpha_2}^{\mu_1,\mu_2}(t)] = t(c_1\alpha_1\lambda_1^{\alpha_1-1}+c_2\alpha_2\lambda_2^{\alpha_2-1}),
	\end{equation}
	\begin{equation}\label{eq:variance}
	\text{Var}[S_{\alpha_1,\alpha_2}^{\mu_1,\mu_2}(t)]= t(c_1\alpha_1(1-\alpha_1)\lambda_1^{\alpha_1-2}+c_2\alpha_2(1-\alpha_2)\lambda_2^{\alpha_2-2}).
	\end{equation}
	The time-changed CPPoK with MTSS is defined as
	$$ \{Z^1_{f_1}(t)\} = \{Z(S_{\alpha_1,\alpha_2}^{\mu_1,\mu_2}(t))\} = \sum_{i=1}^{N^{(k)}(S_{\alpha_1,\alpha_2}^{\mu_1,\mu_2}(t))} Y_i ,~ t \geq 0 $$
	\begin{corollary}
		When $\alpha_1 = \alpha_2=\alpha,~ \lambda_1=\lambda_2 = 0$ and $c_1+c_2 =1$ then MTSS reduces to $\alpha$ stable subordinator $S_{\alpha}(t)$ and so  $Z^1_{f_1}(t)$ reduces to space fractional CPPoK which can be written as
		$$ \{Z(S_{\alpha}(t))\} = \sum_{i=1}^{N^{(k)}(S_{\alpha}(t))} Y_i ,~ t \geq 0. $$
		When $Y_i's \equiv 1$, then $Z(S_{\alpha}(t))$ reduces to space fractional version of PPoK which we call as SFPPoK and can also be represented as 
		$$N^{(k)}(S_{\alpha}(t)) = N_1(S_{\alpha}(t))+2N_2(S_{\alpha}(t))+\ldots +kN_k(S_{\alpha}(t)),~ t\geq 0, $$
		where $N_i(t), i=1,2,\ldots,k$ are independent Poisson process with same intensity  $\lambda>0$. \\
		Its probability generating function is given as 
		$$ G_{\alpha}(u,t) = e^{[-t\lambda^{\alpha}(k-\sum_{j=1}^{k}u^j)^{\alpha}]},~ t\geq 0.$$
	\end{corollary}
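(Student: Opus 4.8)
The plan is to verify each assertion of the corollary by direct substitution into the Laplace exponent of the MTSS, followed by a decomposition of the PPoK into independent Poisson processes and a conditioning argument for the probability generating function. First I would establish the reduction of the subordinator. The Laplace exponent is $f(s) = c_1((s+\lambda_1)^{\alpha_1} - \lambda_1^{\alpha_1}) + c_2((s+\lambda_2)^{\alpha_2} - \lambda_2^{\alpha_2})$, and substituting $\alpha_1 = \alpha_2 = \alpha$ together with $\lambda_1 = \lambda_2 = 0$ collapses it to $f(s) = (c_1 + c_2)s^{\alpha} = s^{\alpha}$, where the last equality uses $c_1 + c_2 = 1$. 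Hence $\mathbb{E}[e^{-sS_{\alpha}(t)}] = e^{-ts^{\alpha}}$, which is exactly the Laplace transform of the $\alpha$-stable subordinator. Since $Z^1_{f_1}(t)$ is defined as the CPPoK evaluated at this subordinator, the substitution immediately yields $Z(S_{\alpha}(t)) = \sum_{i=1}^{N^{(k)}(S_{\alpha}(t))} Y_i$, proving the first claim.

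Next I would specialize to $Y_i \equiv 1$, which gives $Z(S_{\alpha}(t)) = N^{(k)}(S_{\alpha}(t))$. By Definition \ref{ppok}, $N^{(k)}(t) = \sum_{i=1}^{N(t)} X_i$ with $N$ a $PP(k\lambda)$ and the $X_i$ uniform on $\{1,\ldots,k\}$. I would then invoke the thinning property of the Poisson process: the jumps of each fixed size $j \in \{1,\ldots,k\}$ form independent Poisson processes $N_j$ of rate $k\lambda \cdot \tfrac{1}{k} = \lambda$, whose superposition reconstructs $N^{(k)}(t) = \sum_{j=1}^{k} j N_j(t)$. Composing with the independent subordinator preserves this identity, so $N^{(k)}(S_{\alpha}(t)) = \sum_{j=1}^{k} j N_j(S_{\alpha}(t))$, which is the asserted representation of SFPPoK.

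For the probability generating function I would first compute the pgf of the un-subordinated PPoK as a compound Poisson process, obtaining $\mathbb{E}[u^{N^{(k)}(s)}] = \exp\{k\lambda s(\tfrac{1}{k}\sum_{j=1}^{k} u^{j} - 1)\} = \exp\{-\lambda s(k - \sum_{j=1}^{k} u^{j})\}$. Conditioning on $S_{\alpha}(t)$ and using independence, this gives $G_{\alpha}(u,t) = \mathbb{E}[\exp\{-\lambda S_{\alpha}(t)(k - \sum_{j=1}^{k} u^{j})\}]$. For $u \in [0,1]$ the quantity $s := \lambda(k - \sum_{j=1}^{k} u^{j})$ is nonnegative, so the expectation is precisely the stable Laplace transform $\mathbb{E}[e^{-sS_{\alpha}(t)}] = e^{-ts^{\alpha}}$ evaluated at this $s$, and therefore $G_{\alpha}(u,t) = e^{-t\lambda^{\alpha}(k - \sum_{j=1}^{k} u^{j})^{\alpha}}$, as stated.

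The computations are all routine substitutions; the one step requiring genuine care is the decomposition $N^{(k)}(t) = \sum_{j=1}^{k} j N_j(t)$ into weighted independent Poisson processes, which rests on the thinning and superposition theorems for Poisson processes, together with the check that the argument $\lambda(k - \sum_{j=1}^{k} u^{j})$ of the stable Laplace transform remains nonnegative on the relevant domain $u \in [0,1]$, so that the final exponent manipulation is legitimate.
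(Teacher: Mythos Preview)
Your argument is correct, and in fact the paper does not supply a proof of this corollary at all; it is stated as an immediate specialization (and the analogous content in the active text appears only as remarks). The route you take---substituting into the Laplace exponent to identify $S_\alpha$, invoking the thinning/marking decomposition of the compound Poisson representation of $N^{(k)}$ to get $\sum_{j=1}^k j N_j$, and then conditioning on $S_\alpha(t)$ to turn the pgf into a stable Laplace transform---is exactly the natural one and matches the spirit of the paper's earlier computations (cf.\ Lemma~\ref{martingale}). One small point worth making explicit in a write-up: after subordination the processes $N_j(S_\alpha(t))$, $j=1,\dots,k$, share the common random clock and are therefore no longer mutually independent; the corollary's independence hypothesis refers only to the un-subordinated $N_j(t)$, and your argument is consistent with that reading.
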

	\begin{corollary}
	     When $\alpha_1 = \alpha_2=\alpha,~ \lambda_1= \lambda,\lambda_2 = 0$ and $c_1= 1, c_2 =0$ then MTSS reduces to  tempered $\alpha$ stable subordinator $S_{\alpha}^{\lambda}(t)$ and so  $Z^1_{f_1}(t)$ reduces to tempered space fractional CPPoK with tempering parameter $\lambda$
	     $$ \{Z(S_{\alpha}^{\lambda}(t))\} = \sum_{i=1}^{N^{(k)}(S_{\alpha}^{\lambda}(t))} Y_i ,~ t \geq 0. $$
	     Substituting $Y_i's \equiv 1$, then it becomes tempered space fractional version of PPoK, represented as TSFPPoK.
	\end{corollary}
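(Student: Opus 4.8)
The plan is to reduce the assertion to an identity between Laplace exponents and then appeal to the uniqueness of the Laplace transform. First I would insert the prescribed values $\alpha_1=\alpha_2=\alpha$, $\mu_1=\mu$, $\mu_2=0$, $c_1=1$, $c_2=0$ into the Laplace exponent of the MTSS,
$$f(s)=c_1\bigl((s+\mu_1)^{\alpha_1}-\mu_1^{\alpha_1}\bigr)+c_2\bigl((s+\mu_2)^{\alpha_2}-\mu_2^{\alpha_2}\bigr).$$
Because the second mixing weight vanishes, the whole second summand drops out and the first collapses to $f(s)=(s+\mu)^{\alpha}-\mu^{\alpha}$, which is exactly the Laplace exponent of the tempered $\alpha$-stable subordinator $\{S_\alpha^\mu(t)\}_{t\ge 0}$, characterised by $\mathbb{E}[e^{-sS_\alpha^\mu(t)}]=\exp\{-t((s+\mu)^{\alpha}-\mu^{\alpha})\}$.

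Next I would upgrade this equality of one-dimensional Laplace transforms to an identity of the two subordinators in law. Since both $\{S_{\alpha_1,\alpha_2}^{\mu_1,\mu_2}(t)\}_{t\ge 0}$ and $\{S_\alpha^\mu(t)\}_{t\ge 0}$ are L\'evy processes, their finite-dimensional distributions are completely determined by the single marginal law through the stationarity and independence of increments; hence coincidence of the marginal Laplace transforms, together with the uniqueness theorem for Laplace transforms, yields $\{S_{\alpha_1,\alpha_2}^{\mu_1,\mu_2}(t)\}_{t\ge 0}\stackrel{d}{=}\{S_\alpha^\mu(t)\}_{t\ge 0}$.

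The reduction of the time-changed process is then immediate. Replacing $S_{\alpha_1,\alpha_2}^{\mu_1,\mu_2}$ by $S_\alpha^\mu$ in the definition of $TCPPoK(\lambda,H,S_{\alpha_1,\alpha_2}^{\mu_1,\mu_2})$ turns $Z_1(t)$ into $Z(S_\alpha^\mu(t))=\sum_{i=1}^{N^{(k)}(S_\alpha^\mu(t))}Y_i$, which is precisely the tempered space fractional CPPoK. For the final claim I would set $Y_i\equiv 1$, equivalently $H=\delta_1$; by item (ii) following Definition \ref{def:cppok} the compound sum reduces to its index, so $Z(S_\alpha^\mu(t))$ degenerates to $N^{(k)}(S_\alpha^\mu(t))$, the tempered space fractional PPoK (TSFPPoK).

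There is no genuine analytic difficulty here; the substance is a substitution. The one place demanding care is the passage from scalar Laplace transforms to equality in law of the entire process, which I would justify by explicitly invoking that both time changes are L\'evy subordinators, so that a single Laplace exponent pins down all finite-dimensional distributions. I would also note that the degenerate data $c_2=0$ with $\mu_2=0$ cause no indeterminacy, since the zero weight multiplies the finite quantity $(s+\mu_2)^{\alpha_2}-\mu_2^{\alpha_2}$, so the second summand simply vanishes and the collapse of $f$ is unambiguous.
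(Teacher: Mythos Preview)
Your argument is correct and, in fact, more detailed than what the paper provides. In the paper this statement carries no formal proof at all; it is recorded as a direct observation (indeed, in the final version it appears as a Remark rather than a Corollary), the content being nothing more than the substitution of the stated parameter values into the Laplace exponent of the MTSS. Your approach is the same substitution, but you go further by explicitly justifying the passage from equality of one-dimensional Laplace transforms to equality in law of the full subordinator via the L\'evy property, and by invoking the $H=\delta_1$ reduction from Definition~\ref{def:cppok} for the final TSFPPoK claim. None of this extra care is present in the paper, which simply asserts the reduction; so your proposal is sound and strictly more thorough.
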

	\noindent
	The mean and variance of the CPPoK-I can be obtained by putting  \eqref{eq:mean},\eqref{eq:variance} in Theorem \ref{thm:dis}
	\ifx
	\begin{proposition} Let $\{Z(S_{\alpha_1,\alpha_2}^{\mu_1,\mu_2}(t))\}_{t\geq 0}$ be the CPPoK time changed with MTSS. Then the mean and covariance function of this process is given as
		\begin{enumerate}[(i)]
			\item $\mathbb{E}[Z(S_{\alpha_1,\alpha_2}^{\mu_1,\mu_2}(t))]= \frac{k(k+1)}{2} \lambda \mathbb{E}[Y] \mathbb{E}[S_{\alpha_1,\alpha_2}^{\mu_1,\mu_2}(t)].$
			\item $ Cov[Z(S_{\alpha_1,\alpha_2}^{\mu_1,\mu_2}(s)),Z(S_{\alpha_1,\alpha_2}^{\mu_1,\mu_2}(t))] = \frac{k(k+1)}{2}\lambda \text{Var}[Y]\mathbb{E}[Z(S_{\alpha_1,\alpha_2}^{\mu_1,\mu_2}(s))] +  \frac{k(k+1)(2k+1)}{6}\lambda \mathbb{E}[Y]^2\\  \mathbb{E}[Z(S_{\alpha_1,\alpha_2}^{\mu_1,\mu_2}(s))] +  ( \frac{k(k+1)}{2} \lambda \mathbb{E}[Y])^2 \text{Var}[Z(S_{\alpha_1,\alpha_2}^{\mu_1,\mu_2}(s))].$
			\end{enumerate}
	\end{proposition}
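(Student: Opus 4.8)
The plan is to condition on the subordinator and exploit two structural facts: the independence of the outer process $\{Z(t)\}$ from the subordinator, and the fact (recorded in the remark following the martingale characterization) that CPPoK is itself a L\'evy process. The latter gives $\mathbb{E}[Z(t)]=t\,\mathbb{E}[Z(1)]$ and $\text{Var}[Z(t)]=t\,\text{Var}[Z(1)]$, which is exactly the linear-in-$t$ form of the mean and variance formulas stated earlier. Throughout I abbreviate $S(t):=S_{\alpha_1,\alpha_2}^{\mu_1,\mu_2}(t)$.

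For part (i) I would simply condition on $S(t)$ and use the tower property. Since $Z$ is independent of $S$, on $\{S(t)=y\}$ we have $\mathbb{E}[Z(y)]=y\,\mathbb{E}[Z(1)]=\frac{k(k+1)}{2}\lambda y\,\mathbb{E}[Y]$ by the CPPoK mean formula; integrating $y$ against the law of $S(t)$ yields $\mathbb{E}[Z_1(t)]=\frac{k(k+1)}{2}\lambda\,\mathbb{E}[Y]\,\mathbb{E}[S(t)]$, as claimed.

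The substantive part is (ii), whose crux is the mixed second moment $\mathbb{E}[Z_1(s)Z_1(t)]$. Fix $0<s\le t$. Because a subordinator is non-decreasing, $S(s)\le S(t)$ almost surely, so conditionally on $\{S(s)=u,\,S(t)=v\}$ with $u\le v$ I can use the independent-increment structure of $Z$ to write $Z(v)=Z(u)+(Z(v)-Z(u))$, where $Z(v)-Z(u)$ is independent of $Z(u)$ and distributed as $Z(v-u)$. Hence $\mathbb{E}[Z(u)Z(v)]=\mathbb{E}[Z(u)^2]+\mathbb{E}[Z(u)]\,\mathbb{E}[Z(v-u)]$; substituting $\mathbb{E}[Z(u)^2]=u\,\text{Var}[Z(1)]+u^2\,\mathbb{E}[Z(1)]^2$ and $\mathbb{E}[Z(u)]\,\mathbb{E}[Z(v-u)]=u(v-u)\,\mathbb{E}[Z(1)]^2$ collapses the conditional expectation to $u\,\text{Var}[Z(1)]+uv\,\mathbb{E}[Z(1)]^2$. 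Taking expectation over $(u,v)=(S(s),S(t))$ gives $\mathbb{E}[Z_1(s)Z_1(t)]=\mathbb{E}[S(s)]\,\text{Var}[Z(1)]+\mathbb{E}[S(s)S(t)]\,\mathbb{E}[Z(1)]^2$, the identity asserted in the write-up.

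Finally I subtract the product of means from (i), namely $\mathbb{E}[Z_1(s)]\,\mathbb{E}[Z_1(t)]=\mathbb{E}[Z(1)]^2\,\mathbb{E}[S(s)]\,\mathbb{E}[S(t)]$, and regroup to obtain $\text{Cov}[Z_1(s),Z_1(t)]=\mathbb{E}[S(s)]\,\text{Var}[Z(1)]+\mathbb{E}[Z(1)]^2\big(\mathbb{E}[S(s)S(t)]-\mathbb{E}[S(s)]\mathbb{E}[S(t)]\big)$. Since $\{S(t)\}$ is a L\'evy process, the increment $S(t)-S(s)$ is independent of $S(s)$ for $s\le t$, so $\text{Cov}[S(s),S(t)]=\text{Var}[S(s)]$, which yields the stated covariance; setting $s=t$ recovers the variance. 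The main obstacle is the careful bookkeeping of the conditional second moment: one must use that $S(s)\le S(t)$ to license the increment decomposition, and invoke the L\'evy property of $Z$ (so that $\text{Var}[Z(u)]$ is linear in $u$) rather than treating the CPPoK variance as a generic function of time.
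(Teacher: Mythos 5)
Your proof is correct and takes essentially the same route as the paper's own argument (the result appears in compiled form as Theorem \ref{thm:dis}): condition on the subordinator, establish the mixed-moment identity $\mathbb{E}[Z_1(s)Z_1(t)]=\mathbb{E}[S(s)S(t)]\,\mathbb{E}[Z(1)]^2+\mathbb{E}[S(s)]\,\text{Var}[Z(1)]$, subtract the product of means, and use $\text{Cov}[S(s),S(t)]=\text{Var}[S(s)]$ for the L\'evy subordinator; your increment decomposition of $Z$ given $(S(s),S(t))=(u,v)$ is exactly the detail the paper compresses into the phrase ``by using conditioning argument.'' Note only that what you prove is the intended reading of the statement: the factors $\mathbb{E}[Z(S_{\alpha_1,\alpha_2}^{\mu_1,\mu_2}(s))]$ and $\text{Var}[Z(S_{\alpha_1,\alpha_2}^{\mu_1,\mu_2}(s))]$ written there are typos for $\mathbb{E}[S_{\alpha_1,\alpha_2}^{\mu_1,\mu_2}(s)]$ and $\text{Var}[S_{\alpha_1,\alpha_2}^{\mu_1,\mu_2}(s)]$, which is how the paper itself records the covariance.
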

	\noindent	
	On putting $s=t$ in $(ii)$, we get variance.
	\noindent
	Now, we evaluate $ \text{Var}[Z(S_{\alpha_1,\alpha_2}^{\mu_1,\mu_2}(t))] - \mathbb{E}[Z(S_{\alpha_1,\alpha_2}^{\mu_1,\mu_2}(t))]$,
	\begin{eqnarray*}
		&=&   \frac{k(k+1)}{2} \lambda \mathbb{E}[S_{\alpha_1,\alpha_2}^{\mu_1,\mu_2}(t)]\left[\text{Var}[Y]-\mathbb{E}[Y]+ \frac{2k+1}{3}\mathbb{E}[Y]^2 \right] +
		\\ && \left(\frac{k(k+1)}{2} \lambda \mathbb{E}[Y]\right)^2 \text{Var}[S_{\alpha_1,\alpha_2}^{\mu_1,\mu_2}(t)]\\ &>& 0.
	\end{eqnarray*}
	Hence it exhibits overdispersion.\\
	\fi
	\fi
	
	\subsection*{Long-range dependence}
	Now we analyze the LRD property for $ TCPPoK(\lambda,H,S_{\alpha_1,\alpha_2}^{\mu_1,\mu_2})$.

	\begin{theorem}
		The  $\{Z_1(t)\}_{t\geq 0}$ has the LRD property.
	\end{theorem}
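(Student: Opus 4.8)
The plan is to establish the long-range dependence (LRD) property for $\{Z_1(t)\}_{t\geq 0} = TCPPoK(\lambda,H,S_{\alpha_1,\alpha_2}^{\mu_1,\mu_2})$ by computing the correlation function $\mathrm{Corr}[Z_1(s),Z_1(t)]$ explicitly for fixed $s$ and large $t$, and then verifying that it decays like $t^{-d}$ with $d\in(0,1)$ in the sense of Definition \ref{Def:LRD}. The essential ingredients are already in hand: Theorem \ref{thm:dis} gives the covariance
\begin{equation*}
\mathrm{Cov}[Z_1(s),Z_1(t)] = \mathbb{E}[Z(1)]^2\,\mathrm{Var}[S_{\alpha_1,\alpha_2}^{\mu_1,\mu_2}(s)] + \mathbb{E}[S_{\alpha_1,\alpha_2}^{\mu_1,\mu_2}(s)]\,\mathrm{Var}[Z(1)],
\end{equation*}
and equations \eqref{eq:mean}--\eqref{eq:variance} give the linear-in-$t$ behaviour of the mean and variance of the MTSS. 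Since $S_{\alpha_1,\alpha_2}^{\mu_1,\mu_2}$ is a L\'evy subordinator, both $\mathbb{E}[S_{\alpha_1,\alpha_2}^{\mu_1,\mu_2}(t)]$ and $\mathrm{Var}[S_{\alpha_1,\alpha_2}^{\mu_1,\mu_2}(t)]$ are proportional to $t$, which is the structural fact that drives the whole computation.

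First I would fix $0 < s \leq t$ and write
\begin{equation*}
\mathrm{Corr}[Z_1(s),Z_1(t)] = \frac{\mathrm{Cov}[Z_1(s),Z_1(t)]}{\sqrt{\mathrm{Var}[Z_1(s)]\,\mathrm{Var}[Z_1(t)]}},
\end{equation*}
where the numerator is the fixed (in $t$) quantity above and $\mathrm{Var}[Z_1(t)]$ is obtained by setting $s=t$ in the covariance formula. Using \eqref{eq:mean}--\eqref{eq:variance}, the variance $\mathrm{Var}[Z_1(t)]$ is a sum of two terms, one proportional to $\mathrm{Var}[S_{\alpha_1,\alpha_2}^{\mu_1,\mu_2}(t)] \sim c\,t$ and one proportional to $\mathbb{E}[S_{\alpha_1,\alpha_2}^{\mu_1,\mu_2}(t)] \sim c'\,t$; hence $\mathrm{Var}[Z_1(t)]$ grows linearly in $t$, so $\sqrt{\mathrm{Var}[Z_1(t)]}$ behaves like a constant multiple of $t^{1/2}$ for large $t$. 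Since the numerator does not depend on $t$, the ratio is asymptotically $c(s)\,t^{-1/2}$ for an explicit $c(s) > 0$ depending on $s$ through the fixed numerator and $\sqrt{\mathrm{Var}[Z_1(s)]}$. This mirrors exactly the argument used for $CPPoK(\lambda,H)$ in the Proposition proved earlier in the excerpt.

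The concluding step is to read off $d = 1/2$ and invoke Definition \ref{Def:LRD}: since $d = 1/2 \in (0,1)$, the process $\{Z_1(t)\}_{t\geq 0}$ has the LRD property, and I would exhibit the limit $\lim_{t\to\infty} \mathrm{Corr}[Z_1(s),Z_1(t)]/t^{-1/2} = c(s)$ to match the definition's requirement. I expect the main (though mild) obstacle to be bookkeeping rather than conceptual: one must verify that the variance of $Z_1(t)$ grows exactly at order $t$ and not faster or slower, which requires confirming that the $\mathbb{E}[S(t)]$ and $\mathrm{Var}[S(t)]$ terms are both genuinely linear in $t$ (guaranteed by the L\'evy property of the MTSS) and that no cancellation reduces the leading order. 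Once the exact $t^{1/2}$ growth of the standard deviation is pinned down, the power-law exponent $d=1/2$ and the LRD conclusion follow immediately.
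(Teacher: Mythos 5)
Your proposal is correct and follows essentially the same route as the paper: the covariance from Theorem \ref{thm:dis} depends only on $s$, the variance of $Z_1(t)$ grows linearly in $t$ by \eqref{eq:mean}--\eqref{eq:variance}, so $\mathrm{Corr}[Z_1(s),Z_1(t)] \sim c(s)t^{-1/2}$ and LRD follows with $d = 1/2 \in (0,1)$. If anything, your bookkeeping is slightly more careful than the paper's, whose asymptotic constant $K$ for $\mathrm{Var}[Z_1(t)]$ retains only the $\mathbb{E}[S_{\alpha_1,\alpha_2}^{\mu_1,\mu_2}(t)]\,\mathrm{Var}[Z(1)]$ term and drops the equally linear contribution $\mathbb{E}[Z(1)]^2\,\mathrm{Var}[S_{\alpha_1,\alpha_2}^{\mu_1,\mu_2}(t)]$, a harmless slip that you avoid by keeping both terms.
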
 
	\begin{proof}
		We have $ \mathbb{E}[S_{\alpha_1,\alpha_2}^{\mu_1,\mu_2}(t)^n] \sim (c_1\alpha_1\lambda_1^{\alpha_1-1}+c_2\alpha_2\lambda_2^{\alpha_2-1})^n t^n $, as $t\rightarrow \infty $, from \cite{mixedsubordinator}.
	Therefore
		\begin{equation}\label{eq:var}
		\text{Var}[Z_1(t)] 
		\sim  \mathbb{E}[S_{\alpha_1,\alpha_2}^{\mu_1,\mu_2}(s)]\text{Var}[Z(1)] \sim Kt,
		\end{equation}
		where $K= (c_1\alpha_1\lambda_1^{\alpha_1-1}+c_2\alpha_2\lambda_2^{\alpha_2-1})\text{Var}[Z(1)].$\\
		Let $ 0<s<t<\infty $, then
		\begin{eqnarray*}
			Corr[Z_1(s),Z_1(t)] &=& \frac{Cov[Z_1(s),Z_1(t)]}{\sqrt{\text{Var}[Z_1(s)]\text{Var}[Z_1(t)]}},\\
			&\sim& \frac{Cov[Z_1(s),Z_1(t)]}{t^{1/2}\sqrt{K}}, \text{from \eqref{eq:var}} \\
			&= & c(s)t^{-1/2},
		\end{eqnarray*}
		where $ c(s) = \frac{Cov[Z_1(s),Z_1(t)]}{\sqrt{K}} >0. $ Hence from the Definition \ref{Def:LRD}, $TCPPoK(\lambda,H,S_{\alpha_1,\alpha_2}^{\mu_1,\mu_2})$ captures LRD property.
	\end{proof}

	\ifx
	\begin{theorem}
		(Law of iterated logarithm) Let the Laplace exponent $f(s)$ of the subordinator $ D_f(t)$ be regularly varying at $ 0+$ with index $ \alpha \in (0,1)$ . Then ({\color{red} need to be checked })
		$$ \liminf_{t \rightarrow \infty} \frac{Q_f^{(1)}(t)}{g(t)} =  \frac{k(k+1)}{2}\lambda \mathbb{E}[Y] \alpha (1-\alpha)^{\frac{1-\alpha}{\alpha}}~~ a.s., $$
		where,
		$$ g(t) = \frac{loglogt }{f^{-1}(t^{-1}loglogt)}(t>e)$$
	\end{theorem}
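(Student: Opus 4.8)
The plan is to identify $Q_f^{(1)}(t)=Z(D_f(t))$ as the $CPPoK(\lambda,H)$ evaluated along the L\'evy subordinator $D_f$ (the type-I subordinated process $Z_1$ with a general $D_f$ in place of the MTSS), and to split its lower-envelope behaviour into two independent ingredients: a strong law for the L\'evy process $Z$ and a classical law of the iterated logarithm for the subordinator $D_f$. Concretely, I would write
$$\frac{Q_f^{(1)}(t)}{g(t)} = \frac{Z(D_f(t))}{D_f(t)}\cdot\frac{D_f(t)}{g(t)},$$
and show that the first factor converges almost surely to $\mathbb{E}[Z(1)]=\frac{k(k+1)}{2}\lambda\mathbb{E}[Y]$, while the $\liminf$ of the second factor equals the universal constant $\alpha(1-\alpha)^{(1-\alpha)/\alpha}$. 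Multiplying then produces the stated value.

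First I would record that $\{Z(t)\}_{t\ge0}$ is a L\'evy process (as noted in the remark following the martingale characterization) with finite mean $\mathbb{E}[Z(1)]=\frac{k(k+1)}{2}\lambda\mathbb{E}[Y]$; the strong law of large numbers for L\'evy processes then gives $Z(s)/s\to\mathbb{E}[Z(1)]$ a.s. as $s\to\infty$. Since $D_f$ has infinite L\'evy measure, its paths are a.s. strictly increasing and unbounded, so $D_f(t)\to\infty$ a.s.; evaluating the preceding limit along the (independent) random clock $D_f(t)$ yields $Z(D_f(t))/D_f(t)\to\mathbb{E}[Z(1)]$ a.s.

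The heart of the argument is the lower-function theorem for the subordinator itself. Under the hypothesis that the Laplace exponent $f$ is regularly varying at $0+$ with index $\alpha\in(0,1)$, the Fristedt--Pruitt lower-envelope theorem gives
$$\liminf_{t\to\infty}\frac{D_f(t)}{g(t)}=\alpha(1-\alpha)^{(1-\alpha)/\alpha}\quad a.s.,\qquad g(t)=\frac{\log\log t}{f^{-1}(t^{-1}\log\log t)}.$$
For the $\alpha$-stable case $f(s)=s^{\alpha}$ this reduces to the familiar lower envelope $g(t)=t^{1/\alpha}(\log\log t)^{-(1-\alpha)/\alpha}$, which serves as a sanity check on the rate and the constant. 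I would invoke this as the main external input.

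Finally, working on the full-measure event where both the a.s. convergence and the a.s. $\liminf$ hold (legitimate since $Z$ and $D_f$ are independent, so each probability-one event persists on the product space), and using that $a(t)\to a>0$ together with $b(t)\ge0$ forces $\liminf a(t)b(t)=a\,\liminf b(t)$, the factorization gives
$$\liminf_{t\to\infty}\frac{Q_f^{(1)}(t)}{g(t)}=\mathbb{E}[Z(1)]\cdot\alpha(1-\alpha)^{(1-\alpha)/\alpha}=\frac{k(k+1)}{2}\lambda\mathbb{E}[Y]\,\alpha(1-\alpha)^{(1-\alpha)/\alpha}.$$
The main obstacle is securing the subordinator LIL in the stated generality: Fristedt--Pruitt needs the regular-variation hypothesis precisely to pin down both the normalizing function $g$ and the exact constant, and one must verify that the chosen $D_f$ genuinely satisfies regular variation at $0+$ with index $\alpha\in(0,1)$ --- note that the truly tempered MTSS cases give index $1$ at the origin, so the theorem as worded should be read for the space-fractional/stable choices of $D_f$ for which the hypothesis holds. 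A secondary technical point is the factorization of the $\liminf$, where the positivity of $\mathbb{E}[Z(1)]$ and the nonnegativity of $D_f$ are exactly what make the limiting mean factor out cleanly.
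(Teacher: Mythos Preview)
Your proposal is correct and follows essentially the same approach as the paper: the paper's proof also writes $\frac{Q_f^{(1)}(t)}{g(t)} = \frac{Z(D_f(t))}{D_f(t)}\cdot\frac{D_f(t)}{g(t)}$, invokes the law of large numbers to get the first factor converging to $\frac{k(k+1)}{2}\lambda\mathbb{E}[Y]$, and then applies the subordinator LIL (the Fristedt--Pruitt/Bertoin result) for $\liminf D_f(t)/g(t)$. Your version is in fact more careful than the paper's --- you justify the factorization of the $\liminf$, spell out the independence argument, and correctly flag that the genuinely tempered MTSS has Laplace exponent of index $1$ at $0+$, so the hypothesis really bites only in the stable regime.
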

	\textbf{Proof:-} We have $ Q_f^{(1)}(t) = Z(D_f(t))$\\
	Now,
	\begin{eqnarray*}
		\liminf_{t \rightarrow \infty} \frac{Q_f^{(1)}(t)}{g(t)} &= & \liminf_{t \rightarrow \infty} \frac{Z(D_f(t))}{g(t)} \\ &=&
		\liminf_{t \rightarrow \infty} \frac{Z(D_f(t))}{D_f(t)}\frac{D_f(t)}{g(t)} \\&=& \frac{k(k+1)}{2}\lambda \mathbb{E}[Y]  \liminf_{t \rightarrow \infty} \frac{D_f(t)}{g(t)} ,~~
		law~ of~ large~ number \\ &=& \frac{k(k+1)}{2}\lambda \mathbb{E}[Y] \alpha (1-\alpha)^{\frac{1-\alpha}{\alpha}} 
	\end{eqnarray*}
	\fi
	
	\subsection{CPPoK time changed by the first exit time of mixtures of tempered stable subordinators}
	In this subsection, we consider $CPPoK(\lambda,H)$ subordinated with first exit time of MTSS and discuss the asymptotic behavior of its moments. \\
	The first exit time of L\'evy subordinator $S_{\alpha_1,\alpha_2}^{\mu_1,\mu_2}(t)$ also known as inverse subordinator is defined as
	$$ E_{\alpha_1,\alpha_2}^{\mu_1,\mu_2}(t)=\inf\{r\geq 0:S_{\alpha_1,\alpha_2}^{\mu_1,\mu_2}(r)>t\},~ t\geq 0.$$
	\begin{definition}
	   Let $\{Z(t)\}_{t\geq 0}$ be the $CPPoK(\lambda,H)$ as discussed in Definition \ref{def:cppok}, then the subordinated CPPoK with first exit time of MTSS, denoted as $TCPPoK(\lambda,H,E_{\alpha_1,\alpha_2}^{\mu_1,\mu_2})$ is defined as
		$$ Z_2(t) = Z(E_{\alpha_1,\alpha_2}^{\mu_1,\mu_2}(t))= \sum_{i=1}^{N^{(k)}(E_{\alpha_1,\alpha_2}^{\mu_1,\mu_2}(t))} Y_i ,~ t \geq 0 $$
		where the process $ \{Z(t)\}_{t\geq 0} $ is independent from $\{ E_{\alpha_1,\alpha_2}^{\mu_1,\mu_2}(t)\}_{t\geq 0}.$ 
	\end{definition}
\noindent
	The \textit{FDD} of $TCPPoK(\lambda,H,E_{\alpha_1,\alpha_2}^{\mu_1,\mu_2})$ is given as
	\begin{equation*}  
    F_{Z_2(t_1),\ldots,Z_2(t_n)}(y_1,\ldots,y_n)= \prod_{i=1}^{n}\int_{x_i=0}^{\infty}
		F'(y_1,\ldots,y_n) g'(x_1,\ldots,x_n)dx_1\ldots dx_n,
    \end{equation*}
	where $F'(y_1,\ldots,y_n) = F_{Z(x_1),\ldots,Z(x_n)}(y_1,\ldots,y_n)$ is the \textit{FDD} of $CPPoK(\lambda,H)$, and $g'(x_1,\ldots,x_n)$ is the joint density function of $E_{\alpha_1,\alpha_2}^{\mu_1,\mu_2}(t_1),\ldots,E_{\alpha_1,\alpha_2}^{\mu_1,\mu_2}(t_n).$
	\begin{theorem}
	 The mean and covariance function of $TCPPoK(\lambda,H,E_{\alpha_1,\alpha_2}^{\mu_1,\mu_2}) $ are given as
	\begin{enumerate}[(i)]
		\item $\mathbb{E}[Z_2(t)] = \frac{k(k+1)}{2} \lambda \mathbb{E}[Y] \mathbb{E}[E_{\alpha_1,\alpha_2}^{\mu_1,\mu_2}(t)]$,
		\item $ \mbox{Cov}[Z_2(s),Z_2(t)] = \text{Var}[Z(1)] \mathbb{E}[E_{\alpha_1,\alpha_2}^{\mu_1,\mu_2}(s)] +\mathbb{E}[Z(1)]^2 \mbox{Cov}[E_{\alpha_1,\alpha_2}^{\mu_1,\mu_2}(s),E_{\alpha_1,\alpha_2}^{\mu_1,\mu_2}(t)].$
	\end{enumerate}
\end{theorem}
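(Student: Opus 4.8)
The plan is to mirror the conditioning argument used in the proof of Theorem \ref{thm:dis}, exploiting the fact (established earlier, where CPPoK was shown to be equal in distribution to the compound Poisson process $\{D(t)\}_{t\geq 0}$ and hence a L\'evy process) that $\{Z(t)\}_{t\geq 0}$ has stationary and independent increments. This gives the two ingredients I will lean on throughout: $\mathbb{E}[Z(u)] = u\,\mathbb{E}[Z(1)]$ and $\text{Var}[Z(u)] = u\,\text{Var}[Z(1)]$ for deterministic $u\geq 0$, together with the independence of $\{Z(t)\}_{t\geq 0}$ and $\{E_{\alpha_1,\alpha_2}^{\mu_1,\mu_2}(t)\}_{t\geq 0}$.

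For part (i) I would condition on the inverse subordinator and apply the tower property,
$$\mathbb{E}[Z_2(t)] = \mathbb{E}\big[\,\mathbb{E}[Z(E_{\alpha_1,\alpha_2}^{\mu_1,\mu_2}(t)) \mid E_{\alpha_1,\alpha_2}^{\mu_1,\mu_2}(t)]\,\big].$$
Inserting the CPPoK mean formula $\mathbb{E}[Z(u)] = \frac{k(k+1)}{2}\lambda u\,\mathbb{E}[Y]$ evaluated at the random time $u = E_{\alpha_1,\alpha_2}^{\mu_1,\mu_2}(t)$ and pulling out the deterministic factors yields claim (i) immediately.

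For part (ii), fix $0 < s \le t$. The core computation is $\mathbb{E}[Z(u)Z(v)]$ for deterministic $0 \le u \le v$: writing $Z(v) = (Z(v)-Z(u)) + Z(u)$ and using independence of increments gives
$$\mathbb{E}[Z(u)Z(v)] = \mathbb{E}[Z(u)]\,\mathbb{E}[Z(v)-Z(u)] + \mathbb{E}[Z(u)^2] = uv\,\mathbb{E}[Z(1)]^2 + u\,\text{Var}[Z(1)].$$
I would then condition on the pair $(E_{\alpha_1,\alpha_2}^{\mu_1,\mu_2}(s), E_{\alpha_1,\alpha_2}^{\mu_1,\mu_2}(t))$, using the a.s. monotonicity of the sample paths of the inverse subordinator so that $E_{\alpha_1,\alpha_2}^{\mu_1,\mu_2}(s) \le E_{\alpha_1,\alpha_2}^{\mu_1,\mu_2}(t)$ and the smaller argument is always $E_{\alpha_1,\alpha_2}^{\mu_1,\mu_2}(s)$. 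This produces
$$\mathbb{E}[Z_2(s)Z_2(t)] = \mathbb{E}[Z(1)]^2\,\mathbb{E}\big[E_{\alpha_1,\alpha_2}^{\mu_1,\mu_2}(s)E_{\alpha_1,\alpha_2}^{\mu_1,\mu_2}(t)\big] + \text{Var}[Z(1)]\,\mathbb{E}[E_{\alpha_1,\alpha_2}^{\mu_1,\mu_2}(s)].$$
Subtracting $\mathbb{E}[Z_2(s)]\,\mathbb{E}[Z_2(t)] = \mathbb{E}[Z(1)]^2\,\mathbb{E}[E_{\alpha_1,\alpha_2}^{\mu_1,\mu_2}(s)]\,\mathbb{E}[E_{\alpha_1,\alpha_2}^{\mu_1,\mu_2}(t)]$, which follows from part (i), converts the product-of-times term into $\mbox{Cov}[E_{\alpha_1,\alpha_2}^{\mu_1,\mu_2}(s),E_{\alpha_1,\alpha_2}^{\mu_1,\mu_2}(t)]$ and delivers (ii).

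The main obstacle, and the one genuine point of departure from Theorem \ref{thm:dis}, is that the inverse subordinator $\{E_{\alpha_1,\alpha_2}^{\mu_1,\mu_2}(t)\}_{t\geq 0}$ has \emph{neither} stationary \emph{nor} independent increments. Consequently the quantity $\mathbb{E}[E_{\alpha_1,\alpha_2}^{\mu_1,\mu_2}(s)E_{\alpha_1,\alpha_2}^{\mu_1,\mu_2}(t)]$ cannot be collapsed to a variance the way $\mbox{Cov}[S(s),S(t)] = \text{Var}[S(s)]$ was used for the L\'evy subordinator; the full covariance term $\mbox{Cov}[E_{\alpha_1,\alpha_2}^{\mu_1,\mu_2}(s),E_{\alpha_1,\alpha_2}^{\mu_1,\mu_2}(t)]$ must be kept in the final formula. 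The care therefore lies in applying the independent-increment decomposition only to the inner L\'evy process $Z$, and in justifying the $\min$ reduction solely through the monotonicity of $E_{\alpha_1,\alpha_2}^{\mu_1,\mu_2}$, never through any increment property of $E_{\alpha_1,\alpha_2}^{\mu_1,\mu_2}$ itself.
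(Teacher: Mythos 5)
Your proposal is correct and follows essentially the same route as the paper, whose proof is just the one-line remark that the result follows ``in a similar manner'' to Theorem \ref{thm:dis}: condition on the random time, use the L\'evy structure of $Z$ to compute $\mathbb{E}[Z(u)Z(v)]$, and subtract the product of means. Your writeup in fact makes explicit the one adaptation the paper leaves silent --- that for the inverse subordinator the term $\mbox{Cov}[E_{\alpha_1,\alpha_2}^{\mu_1,\mu_2}(s),E_{\alpha_1,\alpha_2}^{\mu_1,\mu_2}(t)]$ cannot be collapsed to $\text{Var}[E_{\alpha_1,\alpha_2}^{\mu_1,\mu_2}(s)]$ since $E_{\alpha_1,\alpha_2}^{\mu_1,\mu_2}$ lacks independent and stationary increments, with only path monotonicity justifying the $\min$ reduction.
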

\begin{proof}
We can prove this in a similar manner as given in Theorem \ref{thm:dis}.	
\end{proof}
\ifx
	\begin{eqnarray*}
		\mbox{Cov}[Q_f^{(2)}(s),Q_f^{(2)}(t)] &=& \left[ \frac{k(k+1)}{2}\lambda  \text{Var}(Y) + \frac{k(k+1)(2k+1)}{6}\lambda  \mathbb{E}[Y]^2 \right]\mathbb{E}[E_{\alpha_1,\alpha_2}^{\mu_1,\mu_2}(s)\}] \\ &&
		+ \left(\frac{k(k+1)}{2} \lambda \mathbb{E}[Y] \right)^2 \mbox{Cov}[E_{\alpha_1,\alpha_2}^{\mu_1,\mu_2}(s),E_{\alpha_1,\alpha_2}^{\mu_1,\mu_2}(t)].
	\end{eqnarray*}
\fi
\noindent
	Now, we discuss the asymptotic behavior of moments of $TCPPoK(\lambda,H,E_{\alpha_1,\alpha_2}^{\mu_1,\mu_2})$. First we need the following definition (see \cite{bertoin,Taqqu2010}).
	\begin{theorem}(Tauberian Theorem) \label{thm:taub}
		Let $ l:(0,\infty)\rightarrow (0,\infty)$ be a slowly varying function at $0 $ (respectively $\infty $) and let $\rho \geq 0$. Then for a function $ U:(0,\infty)\rightarrow (0,\infty) $, the following are equivalent.
		\begin{enumerate}[(i)]
			\item $ U(x) \sim x^{\rho}l(x)/ \Gamma(1+\rho),~~~x\rightarrow 0 ~(respectively ~x\rightarrow \infty).$
			\item $\tilde{U}(s) \sim s^{-\rho-1}l(1/s),~~~s \rightarrow \infty~ (respectively ~s\rightarrow 0), $ where $\tilde{U}(s) $ is the LT of $U(x).$
		\end{enumerate}
	\end{theorem}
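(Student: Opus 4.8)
The plan is to recognize this as the classical Karamata (Hardy--Littlewood) Tauberian theorem and to establish the stated equivalence in the two directions separately, treating in detail only the regime $x\to 0,~s\to\infty$, since the companion regime $x\to\infty,~s\to 0$ follows by the identical argument (the substitution $x\mapsto 1/x$ interchanges slow variation at $0$ and at $\infty$). Throughout I would write the transform as $\tilde U(s)=\int_0^\infty e^{-sx}U(x)\,dx$, so that the extra factor in the exponent $s^{-\rho-1}$ of (ii) arises precisely because one transforms $U$ itself rather than its differential.

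For the Abelian direction $(i)\Rightarrow(ii)$, which is the routine half, I would insert the hypothesis $U(x)\sim x^{\rho}l(x)/\Gamma(1+\rho)$ into $\tilde U(s)$ and rescale by setting $x=u/s$, giving
\[
\tilde U(s)=\frac{1}{s}\int_0^\infty e^{-u}\,U\!\left(\frac{u}{s}\right)du.
\]
The uniform convergence theorem for regularly varying functions then lets me replace $U(u/s)$ by $(u/s)^{\rho}l(1/s)/\Gamma(1+\rho)$ uniformly on compact $u$-sets, while Potter's bounds dominate the contributions near $0$ and near $\infty$ so that dominated convergence applies. The constant cancels through $\int_0^\infty e^{-u}u^{\rho}\,du=\Gamma(1+\rho)$, and combining this with the prefactor $s^{-1}\cdot s^{-\rho}$ leaves $\tilde U(s)\sim s^{-\rho-1}l(1/s)$.

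The Tauberian direction $(ii)\Rightarrow(i)$ is the substantive half. Here one cannot argue purely analytically: a Tauberian side-condition is required, and in the present setting it is supplied by the \emph{monotonicity} of $U$ (in the paper's applications $U$ is a moment- or renewal-type function, hence monotone). I would follow Karamata's original method: form the rescaled functions $U_t(x)=U(tx)/\big(t^{\rho}l(t)/\Gamma(1+\rho)\big)$, note that (ii) forces the Laplace transforms of $U_t$ to converge to that of the candidate limit $x^{\rho}/\Gamma(1+\rho)$, and then invoke the extended continuity theorem for Laplace transforms together with uniqueness of the transform to identify the limiting object. The delicate point is to upgrade this transform-level convergence to the pointwise asymptotic in (i), and this is exactly where monotonicity enters: it supplies the equicontinuity/compactness that rules out oscillation, via a Weierstrass-type approximation of the indicator kernel $\mathbf 1_{[0,1]}$ by exponential polynomials.

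The main obstacle I anticipate is precisely this last step — converting convergence of Laplace transforms into convergence of the functions themselves. Without monotonicity (or a comparable Tauberian hypothesis) the implication is false, so the crux is both to isolate the Tauberian condition cleanly and to carry out the polynomial approximation of the kernel uniformly enough to transfer the limit. Since the statement as given is the version recorded in \cite{bertoin,Taqqu2010}, in practice I would simply cite it, but the scheme above is the self-contained route.
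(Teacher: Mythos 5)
Your proposal is correct in outline, but you should know that the paper itself offers no proof of this theorem at all: it is imported verbatim as a classical result (Karamata's Tauberian theorem), with a pointer to \cite{bertoin,Taqqu2010}, and is used only to read off the large-$t$ asymptotics of $\mathbb{E}[E_{\alpha_1,\alpha_2}^{\mu_1,\mu_2}(t)]$, hence of the mean and variance of $Z_2(t)$, from their Laplace transforms. So the honest comparison is: the paper cites, you reconstruct. Your reconstruction follows the standard route and is sound --- the Abelian half via the substitution $x=u/s$, the uniform convergence theorem and Potter bounds; the Tauberian half via Karamata's method (continuity theorem for Laplace transforms plus approximation of $\mathbf{1}_{[0,1]}$ by polynomials in $e^{-x}$), which can equivalently be packaged by integrating by parts so that $s\tilde{U}(s)$ is the Laplace--Stieltjes transform of the measure $dU$ and the measure form of Karamata's theorem applies; this bookkeeping is also what produces the exponent $-\rho-1$, as you note. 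The genuinely valuable point in your write-up is that the statement as printed in the paper is not literally true: for $(ii)\Rightarrow(i)$ a Tauberian side condition such as monotonicity of $U$ is indispensable. For instance $U(x)=1+\cos x$ has $\tilde{U}(s)=1/s+s/(s^{2}+1)\sim s^{-1}$ as $s\to 0$, so $(ii)$ holds with $\rho=0$, $l\equiv 1$, yet $U$ has no limit at infinity. The omission is harmless in the paper's application, since there $U(t)=\mathbb{E}[(E_{\alpha_1,\alpha_2}^{\mu_1,\mu_2}(t))^{p}]$ is nondecreasing, but your version with the monotonicity hypothesis is the defensible one. One minor imprecision on your side: the two regimes ($x\to 0$, $s\to\infty$ versus $x\to\infty$, $s\to 0$) are handled by parallel arguments, but the substitution $x\mapsto 1/x$ does not literally carry one Laplace transform into the other, so ``identical argument,'' not ``substitution,'' is the correct justification. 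In practice, citing the result --- as you say you would --- is exactly what the authors did.
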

	\noindent We know that LT of $p$th order moment of inverse subordinator $ \{E_f(t)\}_{t\geq 0} $  is given by  (see \cite{TCFPP-pub})
	$$ \mathscr{L}[\mathbb{E}(E_f(t))^p] = \frac{\Gamma (1+p)}{s(f(s))^p},~ p>0$$
	where $ f(s)$ is the corresponding Bernstein function associated with L\'evy subordinator.
	\begin{proposition}
		The asymptotic form of mean and variance of $ \{Z_2(t)\}_{t\geq 0}$ is given as 
		$$\mathbb{E}[Z_2(t)] \sim \frac{k(k+1)}{2} \lambda \mathbb{E}[Y] \frac{t}{c_1\alpha_1 \lambda_1^{\alpha_1 -1}+c_2\alpha_2 \lambda_2^{\alpha_2 -1}}, ~~as~ t\rightarrow \infty. $$
		$$ \text{Var}[Z_2(t)] \sim \frac{k(k+1)}{2}\lambda \left[\text{Var}(Y) + \frac{(2k+1)}{3}  \mathbb{E}[Y]^2 \right] \frac{t}{c_1\alpha_1 \lambda_1^{\alpha_1 -1}+c_2\alpha_2 \lambda_2^{\alpha_2 -1}}, ~~as~ t\rightarrow \infty. $$
	\end{proposition}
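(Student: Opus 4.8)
The plan is to reduce the whole statement to the large-$t$ behaviour of the first two moments of the inverse subordinator $E_{\alpha_1,\alpha_2}^{\mu_1,\mu_2}(t)$ and then to feed these into the exact mean and covariance formulae just established for $TCPPoK(\lambda,H,E_{\alpha_1,\alpha_2}^{\mu_1,\mu_2})$. Setting $s=t$ in the preceding theorem gives
\begin{equation*}
\mathbb{E}[Z_2(t)] = \tfrac{k(k+1)}{2}\lambda\,\mathbb{E}[Y]\,\mathbb{E}[E_{\alpha_1,\alpha_2}^{\mu_1,\mu_2}(t)], \qquad \text{Var}[Z_2(t)] = \text{Var}[Z(1)]\,\mathbb{E}[E_{\alpha_1,\alpha_2}^{\mu_1,\mu_2}(t)] + \mathbb{E}[Z(1)]^2\,\text{Var}[E_{\alpha_1,\alpha_2}^{\mu_1,\mu_2}(t)],
\end{equation*}
so that everything hinges on the asymptotics of $\mathbb{E}[E(t)]$ and $\mathbb{E}[E(t)^2]$.

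First I would analyse the Laplace exponent near the origin. A first-order Taylor expansion of each term $(s+\mu_i)^{\alpha_i}-\mu_i^{\alpha_i}$ shows that $f(s)\sim C s$ as $s\to 0^+$, where $C = c_1\alpha_1\mu_1^{\alpha_1-1}+c_2\alpha_2\mu_2^{\alpha_2-1}$ is exactly the constant appearing in \eqref{eq:mean}, i.e. $C=\mathbb{E}[S_{\alpha_1,\alpha_2}^{\mu_1,\mu_2}(1)]$. Substituting this into the Laplace-transform identity $\mathscr{L}[\mathbb{E}(E(t))^p]=\Gamma(1+p)/(s\,f(s)^p)$ yields, as $s\to 0^+$,
\begin{equation*}
\mathscr{L}[\mathbb{E}(E(t))] \sim \tfrac{1}{C}\,s^{-2}, \qquad \mathscr{L}[\mathbb{E}(E(t))^2] \sim \tfrac{2}{C^2}\,s^{-3}.
\end{equation*}
Applying the Tauberian theorem (Theorem \ref{thm:taub}) in the $s\to 0$ / $t\to\infty$ regime, with the slowly varying function taken to be the relevant constant, gives $\mathbb{E}[E(t)]\sim t/C$ (from $\rho=1$) and $\mathbb{E}[E(t)^2]\sim t^2/C^2$ (from $\rho=2$, using $\Gamma(3)=2$). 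The mean asymptotic then follows immediately on multiplying $\mathbb{E}[E(t)]\sim t/C$ by $\tfrac{k(k+1)}{2}\lambda\mathbb{E}[Y]$, which is the first displayed claim.

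The variance is where the main obstacle lies. Substituting the two moment estimates gives $\text{Var}[E(t)]=\mathbb{E}[E(t)^2]-(\mathbb{E}[E(t)])^2\sim t^2/C^2 - t^2/C^2$, so the leading $t^2$ contributions cancel and $\text{Var}[E(t)]=o(t^2)$. Hence the dominant growth in $\text{Var}[Z_2(t)]$ is carried by the first summand $\text{Var}[Z(1)]\,\mathbb{E}[E(t)]\sim \text{Var}[Z(1)]\,t/C$, and inserting $\text{Var}[Z(1)]=\tfrac{k(k+1)}{2}\lambda\big[\text{Var}(Y)+\tfrac{2k+1}{3}\mathbb{E}[Y]^2\big]$ (the $t=1$ value of the CPPoK variance) reproduces the stated expression. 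The delicate point is that a leading-order Tauberian inversion only resolves the $t^2$ order of each individual moment, and precisely this order cancels in $\text{Var}[E(t)]$; to declare the term $\mathbb{E}[Z(1)]^2\,\text{Var}[E(t)]$ genuinely negligible against the linear term one must control $\text{Var}[E(t)]$ at the sublinear scale, which the crude estimate does not by itself supply. I would therefore either appeal to the finite-mean renewal structure of $E(t)$ (so that $\text{Var}[E(t)]$ enters only through the same $t/C$ order) or, more conservatively, present the result as the leading-order contribution furnished by $\text{Var}[Z(1)]\,\mathbb{E}[E(t)]$, which is exactly the form recorded in the proposition.
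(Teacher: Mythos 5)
Your argument for the mean coincides with the paper's own proof: expand the Laplace exponent $f(s)\sim Cs$ as $s\to 0$, with $C=c_1\alpha_1\mu_1^{\alpha_1-1}+c_2\alpha_2\mu_2^{\alpha_2-1}=\mathbb{E}[S_{\alpha_1,\alpha_2}^{\mu_1,\mu_2}(1)]$, insert this into $\mathscr{L}[\mathbb{E}(E_f(t))^p]=\Gamma(1+p)/\bigl(sf(s)^p\bigr)$ with $p=1$, invert by the Tauberian theorem to get $\mathbb{E}[E_{\alpha_1,\alpha_2}^{\mu_1,\mu_2}(t)]\sim t/C$, and multiply by $\tfrac{k(k+1)}{2}\lambda\mathbb{E}[Y]$. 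That part is correct and is exactly what the paper does.

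For the variance you have correctly located a genuine gap, and you should know the gap is in the paper itself: its entire treatment of the variance is the sentence ``in the similar manner, we can get the expression for variance,'' which is precisely the step you show cannot work, because the leading Tauberian asymptotics $\mathbb{E}[E(t)^2]\sim t^2/C^2$ and $(\mathbb{E}[E(t)])^2\sim t^2/C^2$ cancel, leaving only $\text{Var}[E(t)]=o(t^2)$ (here $E(t)$ abbreviates $E_{\alpha_1,\alpha_2}^{\mu_1,\mu_2}(t)$). However, neither of your proposed repairs closes the gap, and your first one (the finite-mean renewal heuristic) in fact refutes the stated asymptotic rather than proving it. Pushing the expansion one order further, $f(s)=Cs-\tfrac{1}{2}\Sigma s^2+O(s^3)$ with $\Sigma=c_1\alpha_1(1-\alpha_1)\mu_1^{\alpha_1-2}+c_2\alpha_2(1-\alpha_2)\mu_2^{\alpha_2-2}=\text{Var}[S_{\alpha_1,\alpha_2}^{\mu_1,\mu_2}(1)]>0$, one gets
\begin{align*}
\mathscr{L}[\mathbb{E}(E(t))] &= \frac{1}{s f(s)} = \frac{1}{Cs^2}+\frac{\Sigma}{2C^2 s}+O(1),\\
\mathscr{L}[\mathbb{E}(E(t))^2] &= \frac{2}{s f(s)^2} = \frac{2}{C^2 s^3}+\frac{2\Sigma}{C^3 s^2}+O(s^{-1}),
\end{align*}
hence $\mathbb{E}[E(t)]=t/C+\Sigma/(2C^2)+o(1)$, $\mathbb{E}[E(t)^2]=t^2/C^2+2\Sigma t/C^3+o(t)$, and therefore $\text{Var}[E(t)]\sim \Sigma t/C^3$: the variance of the inverse MTSS grows \emph{linearly}, not sublinearly. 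Feeding this into $\text{Var}[Z_2(t)]=\text{Var}[Z(1)]\,\mathbb{E}[E(t)]+\mathbb{E}[Z(1)]^2\,\text{Var}[E(t)]$ shows the second summand contributes $\mathbb{E}[Z(1)]^2\Sigma t/C^3$, of the same order $t$ as the first and with strictly positive coefficient whenever $\mathbb{E}[Y]\neq 0$. So the proposition's variance display is not merely unproved by this route; as stated it omits a term, the correct asymptotic constant being $\text{Var}[Z(1)]/C+\mathbb{E}[Z(1)]^2\Sigma/C^3$. Your fallback of reading the statement as ``the contribution of the first summand only'' is a reinterpretation, not a proof: the honest conclusion is that the mean asymptotic stands, while the variance claim requires the extra term before any proof can succeed.
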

	\begin{proof} 
		Let $\tilde{M}(s)$ be the LT of $\mathbb{E}[E_{\alpha_1,\alpha_2}^{\mu_1,\mu_2}(t)]$. Then
		\begin{eqnarray*}
	\tilde{M}(s) = \mathscr{L}[\mathbb{E}[E_{\alpha_1,\alpha_2}^{\mu_1,\mu_2}(t)]] &=&\frac{1}{s(c_1((s+\lambda_1)^{\alpha_1}-\lambda_1^{\alpha_1})+c_2((s+\lambda_2)^{\alpha_2}-\lambda_2^{\alpha_2}))}\\
	&\sim & \frac{1}{s^2(c_1\alpha_1 \lambda_1^{\alpha_1 -1} + (c_2\alpha_2 \lambda_2^{\alpha_2 -1})}, ~\text{as $s\rightarrow 0$, from \cite{mixedsubordinator}}.
		\end{eqnarray*}
	Then, by using Theorem \ref{thm:taub}, we have that
\begin{eqnarray*}
\mathbb{E}[Z_2(t)]  &=& \frac{k(k+1)}{2} \lambda \mathbb{E}[Y] \mathbb{E}[E_{\alpha_1,\alpha_2}^{\mu_1,\mu_2}(t)]\\
&\sim & \frac{\lambda k(k+1)}{2} \mathbb{E}[Y]\frac{t}{c_1\alpha_1 \lambda_1^{\alpha_1 -1}+c_2\alpha_2 \lambda_2^{\alpha_2 -1}}, ~~as~ t\rightarrow \infty. 
\end{eqnarray*}	
In the similar manner, we can get the expression for variance.
\end{proof}	
	
\ifx
	  \begin{align*}
		\mathbb{E}[Z_{f_1}^{(2)}(t)] =& \frac{k(k+1)}{2} \lambda \mathbb{E}[Y] \mathbb{E}[E_{S_{\alpha_1,\alpha_2}^{\mu_1,\mu_2}}(t)\}]\\ &
		\text{by using equation \eqref{asymp} in theorem \eqref{thm:taub}, we get}\\
		\sim & \frac{k(k+1)}{2} \lambda \mathbb{E}[Y] \frac{t}{c_1\alpha_1 \lambda_1^{\alpha_1 -1}+c_2\alpha_2 \lambda_2^{\alpha_2 -1}}, ~~as~ t\rightarrow \infty.
		\end{align*}
		and
		\begin{align*}
		\text{Var}[Z_{f_1}^{(2)}(t)] =& \left[ \frac{k(k+1)}{2}\lambda  \text{Var}(Y) + \frac{k(k+1)(2k+1)}{6}\lambda  \mathbb{E}[Y]^2 \right]\mathbb{E}[E_{S_{\alpha_1,\alpha_2}^{\mu_1,\mu_2}}(t)\}] \\ &
		+ \left(\frac{k(k+1)}{2} \lambda \mathbb{E}[Y] \right)^2 \text{Var}[E_{S_{\alpha_1,\alpha_2}^{\mu_1,\mu_2}}(t)] \\
		\sim & \frac{k(k+1)}{2}\lambda \left[\text{Var}(Y) + \frac{(2k+1)}{3}  \mathbb{E}[Y]^2 \right] \frac{t}{c_1\alpha_1 \lambda_1^{\alpha_1 -1}+c_2\alpha_2 \lambda_2^{\alpha_2 -1}}, ~~as~ t\rightarrow \infty.
		\end{align*}
		\fi
		\noindent		
{\bf Acknowledgement}. The authors would like to thank Aditya Maheshwari(IIM, Indore) for being part of various discussions on this topic.
 
	\bibliographystyle{abbrv}
	\bibliography{researchbib}
\end{document}